\numberwithin{equation}{section}
\def\cleardoublepage{\clearpage\if@twoside \ifodd\c@page\else%
    \hbox{}%
    \thispagestyle{empty}%
    \newpage%
    \if@twocolumn\hbox{}\newpage\fi\fi\fi} 
\def \C{\mathbb{C}}
\def \N{\mathbb{N}}
\def \Q{\mathbb{Q}}
\def \Z{\mathbb{Z}}
\def \a{\alpha}
\def \b{\beta}
\def \sn{\mbox{sn}^2(z)}
\def \dn{\mbox{dn}^2(z)}  
\def \ns{\mbox{ns}^2(z)}
\def \nss{\mbox{ns}^2(2z)}
\def \nc{\mbox{nc}^2(z)}
\def \nd{\mbox{nd}^2(z)}  
\def \sno{\mbox{sn}^2z}
\def \ssno{\mbox{sn}^4z}
\def \cno{\mbox{cn}^2z} 
\def \dno{\mbox{dn}^2z}  
\def \ddno{\mbox{dn}^4z}
\newtheorem{conjecture}{Conjecture}[section] 
\newtheorem{thm}{Theorem}[section]
\newtheorem{lem}{Lemma}[section]
\newtheorem{rem}{Remark}[section]
\newtheorem{example*}{Example}[section]
\begin{document}
\title{
{\begin{flushleft}
\vskip 0.45in
{\normalsize\bfseries\textit{ }}
\end{flushleft}
\vskip 0.45in
\bfseries\scshape On linear relations for Dirichlet series 
formed by recursive sequences of second order}}

\thispagestyle{fancy}
\fancyhead{}
\fancyhead[L]{In: Book Title \\ 
Editor: Editor Name, pp. {\thepage-\pageref{lastpage-01}}} 
\fancyhead[R]{ISBN 0000000000  \\
\copyright~2007 Nova Science Publishers, Inc.}
\fancyfoot{}
\renewcommand{\headrulewidth}{0pt}

\author{\bfseries\itshape Carsten Elsner\thanks{Fachhochschule f{\"u}r die Wirtschaft, 
University of Applied Sciences, Freundallee 15,
D-30173 Hannover, Germany \newline e-mail: carsten.elsner@fhdw.de},
Niclas Technau\thanks{Technische Universit{\"a}t Graz, Institut f\"ur Analysis 
und Zahlentheorie, Steyrergasse 30/II, A-8010 Graz, Austria 
\newline e-mail: technau@math.tugraz.at; or: 
University of York, Department of Mathematics, Heslington, York, YO10 5DD, UK,
\newline e-mail: niclas.technau@york.ac.uk}}

\date{}
\maketitle
\thispagestyle{empty}
\setcounter{page}{1}
\[\]
\[\]
\begin{abstract}
Let $F_n$ and $L_n$ be the Fibonacci and Lucas numbers, respectively. 
Four corresponding zeta functions in $s$ are defined by
\[\zeta_F(s) \,:=\, \sum_{n=1}^{\infty} \frac{1}{F_n^s}\,,\quad 
\zeta_F^*(s) \,:=\,\sum_{n=1}^{\infty} \frac{{(-1)}^{n+1}}{F_n^s}\,,\quad 
\zeta_L(s) \,:=\, \sum_{n=1}^{\infty} \frac{1}{L_n^s}\,,\quad 
\zeta_L^*(s) \,:=\, \sum_{n=1}^{\infty} \frac{{(-1)}^{n+1}}{L_n^s} \,.\]
For positive integers $s$ the transcendence of these values is known 
as well as algebraic independence or dependence results for sets of these 
numbers. In this paper, we investigate linear forms in the above zeta functions 
and determine the dimension of linear spaces spanned by such linear forms.
In particular, it is established that for any positive integer $m$ the solutions of
\[\sum_{s=1}^m \big( \,t_s\zeta_F(2s) + u_s\zeta_F^*(2s) 
+ v_s\zeta_L(2s) + w_s\zeta_L^*(2s) \,\big) \,=\, 0 \]
with $t_s,u_s,v_s,w_s \in {\Q}$ $(1\leq s\leq m)$ form a ${\Q}$-vector 
space of dimension $m$. This proves a conjecture from the Ph.D. thesis of M.\,Stein,
who, in 2012, was inspired by the relation $-2\zeta_F(2)+\zeta_F^*(2)+5\zeta_L^*(2)=0$.
All the results are also true for zeta functions in $s$ 
where the Fibonacci and Lucas numbers are replaced by numbers 
from sequences satisfying a second order recurrence formula. 
\end{abstract}
 
\vspace{.08in} \noindent \textbf{Keywords:} Fibonacci and Lucas numbers, 
linear independence, elliptic functions, q-series \\
\noindent \textbf{AMS Subject Classification:} 11J72, 11J85 . \\


\pagestyle{fancy}  
\fancyhead{}
\fancyhead[EC]{Carsten Elsner and Niclas Technau}
\fancyhead[EL,OR]{\thepage}
\fancyhead[OC]{On linear relations for Dirichlet series 
formed by recursive sequences of second order}
\fancyfoot{}
\renewcommand\headrulewidth{0.5pt} 

\newpage
\section{Introduction, and statement of the results} \label{Sec1}
There are many results on series formed by powers of such integers which satisfy a linear recurrence relation of order two. By studying the algebraic
character of sets of such series, surprising identities were found. For instance, in the case of Fibonacci numbers $F_n$, let
\[x \,:=\, \sum_{n=1}^{\infty} \frac{1}{F_n^2}\,,\quad y \,:=\, \sum_{n=1}^{\infty} \frac{1}{F_n^4}\,,\quad z \,:=\, \sum_{n=1}^{\infty} \frac{1}{F_n^6} \,.\]
Then, we have the following {\em non-linear relation\/} between four series (cf. \cite{Elsner2}), namely
\begin{eqnarray*}
\sum_{n=1}^{\infty} \frac{1}{F_n^8} &=& \frac{15y}{14} + \frac{1}{378{(4x+5)}^2}\big( \,256x^6 - 3456x^5 + 2880x^4 + 1792x^3z - 11100x^3 + 20160x^2z \\
&& -\,10125x^2 + 7560xz + 3136z^2 - 1050z \,\big) \,.
\end{eqnarray*}

Now, in order to look at the situation from a more conceptional point of view,
let $\a,\b$ be algebraic numbers with $|\b|<1$ and $\a \b =-1$. We define
\[U_n \,:=\, \frac{\a^n - \b^n}{\a - \b}\,
,\quad V_n \,:=\, \a^n + \b^n \]
for $n\geq 0$. Both sequences, $U_n$ and $V_n$, 
satisfy the second order recurrence formula
\[X_{n+2} \,=\, (\a + \b)X_{n+1}+X_n \qquad (n\geq 0)\,.\] 
For $\b =(1-\sqrt{5})/2$ we get the Fibonacci numbers 
$U_n=F_n$ and the Lucas numbers $V_n=L_n=F_{n-1}+F_{n+1}$. 
Moreover, for positive integers $s$ we introduce 
the series
\[\begin{array}{lcllcl}
\Phi_{2s} &:=& \displaystyle {(\a - \b)}^{-2s}
\sum_{n=1}^{\infty} \frac{1}{U_n^{2s}} \,,
\qquad \quad & \Phi^*_{2s} &:=& 
\displaystyle {(\a - \b)}^{-2s}\sum_{n=1}^{\infty} 
\frac{{(-1)}^{n+1}}{U_n^{2s}} \,, \\ \\
\Psi_{2s} &:=& \displaystyle \sum_{n=1}^{\infty} 
\frac{1}{V_n^{2s}} \,,
\qquad \quad & \Psi^*_{2s} &:=& \displaystyle  
\sum_{n=1}^{\infty} \frac{{(-1)}^{n+1}}{V_n^{2s}} \,.
\end{array} \]
In this paper we focus on {\em linear forms with rational coefficients\/} in 
\[\big\{ \Phi_{2s},\Phi^*_{2s},\Psi_{2s},\Psi^*_{2s} \,:\, s=1,\dots,m \big\} \]
and prove a conjecture of M.\,Stein on the dimension of the kernel of such forms;
see (\ref{eq: defining equation of the vectorspace in question}) and
Conjecture\,\ref{conj: Stein} at the end of this introductory section.
 
For a survey on irrationality, transcendence 
and algebraic independence results for series 
involving reciprocal Fibonacci and Lucas numbers,
we refer the reader to Sections\,1.1 to 1.3 in \cite{Stein}. 
Here, we present an outline devoted to the most 
important interim results which finally lead to the problem treated in this paper.
At the beginning in 1989, Andr{\'e}-Jeannin \cite{Jeannin} 
proved the irrationality of the series
\[\sum_{n=1}^{\infty} \frac{1}{F_n}\,,\quad 
\sum_{n=1}^{\infty} \frac{{(-1)}^n}{F_n}\,,\quad 
\sum_{n=1}^{\infty} \frac{1}{L_n}\,,\quad 
\sum_{n=1}^{\infty} \frac{{(-1)}^n}{L_n} \,,\]
where the underlying idea was inspired by Ap{\'e}ry's proof 
of the irrationality of $\zeta(3)$. Eight years later,
D.\,Duverney, Ke.\,Nishioka, Ku.\,Nishioka, and
I.\,Shiokawa \cite{Duverney} succeeded in proving 
the transcendence of the numbers
\begin{equation}
\sum_{n=1}^{\infty} \frac{1}{F_n^{2s}}\,,\quad 
\sum_{n=1}^{\infty} \frac{1}{L_n^{2s}}\,,\quad 
\sum_{n=1}^{\infty} \frac{1}{F_{2n-1}^s}\,,\quad 
\sum_{n=1}^{\infty} \frac{1}{L_{2n}^s} 
\label{EQ1.10}
\end{equation}
for any positive integer $s$. These results are derived 
using Yu.\,Nesterenko's theorem on 
Ramanujan functions \cite{Yuri} as follows. 
Let $K$ and $E$ denote the complete elliptic integrals of the first 
and second kind, respectively, with the modulus 
$k\in {\C}\setminus \{ 0,\pm 1\}$, defined by
\begin{equation}
K\,=\,K(k)\,=\,\int_0^1 \frac{dt}{\sqrt{(1-t^2)(1-k^2t^2)}} \,,\qquad 
E\,=\,E(k)\,=\,\int_0^1 \sqrt{\frac{1-k^2t^2}{1-t^2}}\,dt \,.
\label{EQ1.20}
\end{equation}
Their relationship to the well-known Ramanujan functions 
\begin{eqnarray*}
P(z) &=& 1 - 24\sum_{n=1}^{\infty} \sigma_1(n)z^n \,,\\
Q(z) &=& 1 + 240\sum_{n=1}^{\infty} \sigma_3(n)z^n \,,\\
R(z) &=& 1 - 504\sum_{n=1}^{\infty} \sigma_5(n)z^n 
\end{eqnarray*}
with
\[\sigma_r(n) \,=\, \sum_{d|n} d^r \qquad (r=1,2,\dots) \]
are given by
\begin{eqnarray*}
P(q^2) &=& {\Big( \,\frac{2K}{\pi}\,\Big)}^2\Big( \,
\frac{3E}{K}-2+k^2\,\Big) \,,\\
Q(q^2) &=& {\Big( \,\frac{2K}{\pi}\,\Big)}^4
\big( \,1-k^2+k^4\,\big) \,,\\
R(q^2) &=& {\Big( \,\frac{2K}{\pi}\,\Big)}^6 
\frac{1}{2}\big( \,1+k^2\,\big) \big( \,1-2k^2\,\big) 
\big( \,2-k^2\,\big)\,.
\end{eqnarray*}
By Nesterenko's theorem, for any algebraic number $q$,
the quantities $P(q^2)$, $Q(q^2)$, and $R(q^2)$ 
are algebraically independent over ${\Q}$, and so are the
quantities $K/\pi$, $E/\pi$ , and $k$. \\
Now, the series from (\ref{EQ1.10}) can be written 
as series of hyperbolic functions. Applying some identities from 
I.J.\,Zucker \cite{Zucker}, one can express the 
latter series in terms of so-called $q$-series defined by
\[\begin{array}{lcllcl}
A_{2j+1}(q) &=& \displaystyle \sum_{n=1}^{\infty} 
\frac{n^{2j+1}q^{2n}}{1-q^{2n}} \,,\qquad 
B_{2j+1}(q) &=& \displaystyle \sum_{n=1}^{\infty} 
\frac{{(-1)}^{n+1} n^{2j+1}q^{2n}}{1-q^{2n}} \,, \\ \\
C_{2j+1}(q) &=& \displaystyle \sum_{n=1}^{\infty} 
\frac{n^{2j+1}q^{n}}{1-q^{2n}} \,,\qquad 
D_{2j+1}(q) &=& \displaystyle \sum_{n=1}^{\infty} 
\frac{{(-1)}^{n+1} n^{2j+1}q^{n}}{1-q^{2n}} \,, 
\end{array} \]
where $q$ is a real algebraic number. 
Finally, these $q$-series can be expressed 
as polynomials in $K/\pi$, $E/\pi$ , and $k$, 
so that finally  the transcendence of 
the series from (\ref{EQ1.10}) follows 
from the algebraic independence of $K/\pi$, $E/\pi$,
and $k$ over ${\Q}$. For more details, see Section\,1.4 in \cite{Stein}.
\\
\\
This was the state when the first-named author 
of this paper started his joint work with Sh.\,Shimomura 
and I.\,Shiokawa on this subject. They extended it to
problems on algebraic independence and dependence over ${\Q}$. 
At the beginning, these authors only considered the Fibonacci 
and Lucas zeta functions, defined by 
\[\zeta_F(s) \,:=\, \sum_{n=1}^{\infty} \frac{1}{F_n^s}\,,\quad 
\zeta_F^*(s) \,:=\,\sum_{n=1}^{\infty} \frac{{(-1)}^{n+1}}{F_n^s}\,,\quad 
\zeta_L(s) \,:=\, \sum_{n=1}^{\infty} \frac{1}{L_n^s}\,,\quad 
\zeta_L^*(s) \,:=\, \sum_{n=1}^{\infty} \frac{{(-1)}^{n+1}}{L_n^s} \,.\]
It is shown in \cite{Elsner2} that the three numbers in each of the sets
\[\begin{array}{ll}
\big\{ \,\zeta_F(2),\zeta_F(4),\zeta_F(6)\,\big\} \,,\qquad 
& \big\{ \,\zeta_F^*(2),\zeta_F^*(4),\zeta_F^*(6)\,\big\} \,,\\ \\
\big\{ \,\zeta_L(2),\zeta_L(4),\zeta_L(6)\,\big\} \,,\qquad 
& \big\{ \,\zeta_L^*(2),\zeta_L^*(4),\zeta_L^*(6)\,\big\}
\end{array} \]
are algebraically independent over ${\Q}$, 
and that for any integer $s\geq 4$ each of the series 
$\zeta_F(2s),\zeta_F^*(2s),\zeta_L(2s)$ and $\zeta_L^*(2s)$ can be
expressed as rational functions in the three series 
of the same type for $s=2,4,6$, i.e., for $s\geq 4$ we have
\[\begin{array}{ll}
\zeta_F(2s) \,\in \, {\Q}\big( \,\zeta_F(2),\zeta_F(4),\zeta_F(6)\,\big) \,,\qquad 
& \zeta_F^*(2s) \,\in \, {\Q}\big( \,\zeta_F^*(2),\zeta_F^*(4),\zeta_F^*(6)\,\big)
\,, \\ \\
\zeta_L(2s) \,\in \, {\Q} ( \,\zeta_L(2),\zeta_L(4),\zeta_L(6)\, )
 \,,\qquad 
& \zeta_L^*(2s) \,\in \, {\Q} (\zeta_L^*(2),\zeta_L^*(4),\zeta_L^*(6)) \,.
\end{array} \]
For an explicit example we refer to the identity for $\zeta_F(8)$ given at the beginning of this section. 
A few years later the authors describe in \cite{Elsner4} all subsets of
\[\Gamma \,:=\, \big\{ \,\zeta_F(2),\zeta_F(4),\zeta_F(6),\zeta_F^*(2),\zeta_F^*(4),\zeta_F^*(6),
\zeta_L(2),\zeta_L(4),\zeta_L(6),\zeta_L^*(2),\zeta_L^*(4),\zeta_L^*(6)\,\big\} \]
with either algebraically dependent or algebraically independent numbers. 
They prove that every four numbers in $\Gamma$ are algebraically dependent over $\Q$, 
whereas every two distinct numbers are algebraically independent over ${\Q}$. Moreover, there are 198 of the 220 three-element subsets of $\Gamma$, each containing
algebraically independent numbers over ${\Q}$. For the remaining 22 three-element subsets 
of $\Gamma$, explicit algebraic relations are given. A complete list of
these relations is contained in the appendix of \cite{Stein}. 
With regard to the main result of this paper, we cite the only linear relation among them,
\begin{equation}
-2\Phi_2 + \Phi_2^* + \Psi_2^* \,=\, 0 \,.
\label{EQ1.30}
\end{equation}
Note that 
\[\begin{array}{lcccrcl}
\zeta_F(2s) &=& 5^s\Phi_{2s} \,,\qquad & \zeta_F^*(2s) &=& 5^s\Phi_{2s}^* \,,\\
\zeta_L(2s) &=& \Psi_{2s} \,,\qquad & \zeta_L^*(2s) &=& \Psi_{2s}^* \,.
\end{array} \]
A more general result for the Fibonacci zeta function at positive even 
integers is presented in \cite{Elsner5}. The authors establish 
an algebraic independence criterion based on the nonvanishing 
of the Jacobian determinant of a quadratic system of polynomials,
which is the kernel of the proof that for positive integers
$s_1<s_2<s_3$ the values $\zeta_F(2s_1)$, $\zeta_F(2s_2)$, 
and $\zeta_F(2s_3)$ are algebraically independent over ${\Q}$ 
if and only if at least one of the numbers $s_i$ is even. \\

The main result in M.\,Stein's Ph.D. thesis \cite[Theorem\,5.3]{Stein} 
generalizes all these investigations to the set
\[\Omega \,:=\, \big\{ \,\Phi_{2s_1},\Phi_{2s_2}^*,
\Psi_{2s_3},\Psi_{2s_4}^*\,|\, s_1,s_2,s_3,s_4 \in {\N}\,\big\} \] 
by specifying the subsets of $\Omega$ which are algebraically independent over ${\Q}$. 
In particular, all two-element subsets of $\Omega$ consist of algebraically
independent numbers over ${\Q}$. Also, 22 classes of three-element subsets 
are precisely described having the same property. Additionally, it turns out that any four
numbers in $\Omega$ are algebraically dependent over ${\Q}$, see \cite[Theorem\,5.4]{Stein}. \\
Although he has settled the problem of the algebraic independence 
or dependence for subsets of $\Omega$ completely, M.\,Stein posed 
the problem on the {\em linear\/} independence and dependence of $m$-subsets of 
$\Omega$ with $m>4$ (cf. \cite[Section\,5.3]{Stein}). 
He gives two partial answers. For his first result in this direction
he denotes for any positive integer $s$ by $W_{2s}\in \Omega$ 
one of the numbers $\Phi_{2s}$, $\Phi_{2s}^*$, $\Psi_{2s}$, or $\Psi_{2s}^*$. 
\begin{quote}
{\bf Theorem\,A} \cite[Theorem\,5.5]{Stein} \\ 
{\em  Let $1\leq s_1<s_2<\dots <s_m$ be $m$ positive integers for some $m\in {\N}$. 
Then the numbers $W_{2s_1},\dots,W_{2s_m}$ are linearly independent over 
${\Q}(E/\pi,k)$.\/}
\end{quote}
For the second result a positive integer $s$ is fixed, and the four numbers 
$\Phi_{2s}$, $\Phi_{2s}^*$, $\Psi_{2s}$, and $\Psi_{2s}^*$ are con- \\sidered. 
\begin{quote}
{\bf Theorem\,B} \cite[Theorem\,5.6]{Stein} \\
{\em For any $s\geq 2$ the four numbers $\Phi_{2s}$, $\Phi_{2s}^*$, $\Psi_{2s}$, 
and $\Psi_{2s}^*$ are linearly independent over ${\Q}$, i.e., the linear equation
\begin{equation}
t_s\Phi_{2s} + u_s\Phi_{2s}^* + v_s\Psi_{2s} + w_s\Psi_{2s}^* \,=\, 0
\label{EQ1.40}
\end{equation}
has no nontrivial solution $t_s,u_s,v_s,w_s\in {\Q}$. For $s=1$ 
the general solution of (\ref{EQ1.40}) is\/}
\begin{equation}
-2u_1\Phi_2 + u_1\Phi_2^* + u_1\Psi_2^* \,=\, 0 \qquad \big( u_1 \in {\Q}\big) \,.
\label{EQ1.50}
\end{equation}
\end{quote}
By setting $u_1=1$, M.\,Stein regains in (\ref{EQ1.50}) the result from (\ref{EQ1.30}). \\

In his endeavor to construct subsets of $\Omega$ with more than 
three linearly independent elements, M.\,Stein found two examples 
(c.f. \cite[p.\,83]{Stein}),
\[(-2u+v)\Phi_2 + u\Phi_2^* + (u-v)\Psi_2^* - 7v\Phi_4 
+ 8v\Phi_4^* + v\Psi_4 \,=\, 0 \qquad \big( u,v \in {\Q} \big) 
\]
and
\begin{eqnarray*}
&& -6(u+w)\Phi_2 + 6u\Phi_2^* + 6w\Psi_2^* + 6(u-v-w)\Phi_4 
+ 6v\Phi_4^* + 6(u-w)\Psi_4 + 32(8u-v-8w)\Phi_6 \\
&+& (-248u + 31v + 248w)\Phi_6^* + (-8u + v + 8w)\Psi_6^* \,=\, 0 \qquad 
\big( u,v,w \in {\Q} \big)\,. 
\end{eqnarray*}
Using a computer-algebra-system, M.\,Stein found more examples 
of such linear identities for larger subsets of $\Omega$. 
We define $V_{m}$ to be the set of all $\left(t_{1},\ldots,t_{4m}\right)\in\mathbb{Q}^{4m}$
satisfying
\begin{equation}
\sum_{s=1}^{m}\left(t_{4s-3}\Phi_{2s}+t_{4s-2}\Phi_{2s}^{*}
+t_{4s-1}\Psi_{2s}+t_{4s}\Psi_{2s}^{*}\right)=0.
\label{eq: defining equation of the vectorspace in question}
\end{equation}
\begin{conjecture}[M. Stein]
\label{conj: Stein}One has $\dim_{\mathbb{Q}}V_{m}=m$ for every
positive integer $m$. Moreover, we have for $\left(t_{1},\ldots,t_{4m}\right)\in V_{m}$
\begin{equation}
t_{4s}=0\qquad\text{for}\quad2\mid s,\qquad\text{and}\qquad t_{4s-1}
=0\qquad\text{for}\quad2\nmid s.
\label{eq: additional assertion in Stein Conjecture}
\end{equation}
\end{conjecture}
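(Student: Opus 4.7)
The plan is to reduce Stein's conjecture to a polynomial identity in the three algebraically independent transcendentals $K/\pi$, $E/\pi$, and $k$, and then to analyze that identity by monomial-coefficient extraction. Following the strategy recalled in the introduction, each of $\Phi_{2s}, \Phi_{2s}^*, \Psi_{2s}, \Psi_{2s}^*$ can be rewritten, via hyperbolic partial fractions and Zucker's identities, as a rational linear combination of the $q$-series $A_{2j+1}(q), B_{2j+1}(q), C_{2j+1}(q), D_{2j+1}(q)$ for $j = 0, 1, \ldots, s-1$, evaluated at the algebraic value of $q$ arising from the recurrence parameter $\beta$. Each such $q$-series is itself a polynomial with rational coefficients in $X := K/\pi$, $Y := E/\pi$, and $Z := k$. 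By Nesterenko's theorem the triple $(X, Y, Z)$ is algebraically independent over $\mathbb{Q}$, so the relation \eqref{eq: defining equation of the vectorspace in question} is equivalent to the identically-zero polynomial identity
$$P_m(X, Y, Z) \equiv 0,$$
where $P_m$ is obtained by substituting these polynomial expansions into the left-hand side of \eqref{eq: defining equation of the vectorspace in question}. Extracting the coefficient of each monomial $X^a Y^b Z^c$ yields a homogeneous linear system in the $4m$ unknowns $t_1, \ldots, t_{4m}$; Stein's conjecture asserts that this system has kernel of dimension exactly $m$, with the additional vanishing pattern \eqref{eq: additional assertion in Stein Conjecture}.

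For the upper bound $\dim_{\mathbb{Q}} V_m \leq m$, the goal is to produce $3m$ linearly independent equations from the monomial coefficients of $P_m$. I would organize this via a triangular structure indexed by the level $s$: at each level I would identify a distinguished triple of leading monomials whose coefficients receive contributions only from terms of level $\geq s$ (essentially the highest-degree-in-$X$ monomials of three of the four series $\Phi_{2s}, \Phi_{2s}^*, \Psi_{2s}, \Psi_{2s}^*$, with the exact choice dictated by the parity of $s$). Summing over $s = 1, \ldots, m$ produces the required $3m$ conditions. The additional parity vanishings \eqref{eq: additional assertion in Stein Conjecture} should emerge naturally from this analysis: certain leading monomials are invariant, respectively anti-invariant, under the symmetry $q \mapsto -q$ that interchanges the starred with the unstarred series, forcing $t_{4s}$ (resp.\ $t_{4s-1}$) to vanish according to the parity of $s$. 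Theorem A can be viewed as the mod-$\mathbb{Q}(E/\pi, k)$ shadow of this triangularity, and should serve as a useful guidepost.

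For the matching lower bound $\dim_{\mathbb{Q}} V_m \geq m$, I would construct $m$ explicit linearly independent relations by induction on $m$, starting from \eqref{EQ1.30} and Stein's two computer-algebra examples at levels $m = 2, 3$. The triangularity from the previous step reduces the induction step to a single rank computation: one must verify that among the leading-monomial coefficients of $\Phi_{2m}, \Phi_{2m}^*, \Psi_{2m}, \Psi_{2m}^*$, the relevant $3 \times 4$ matrix has rank $3$, which leaves a one-dimensional solution space compatible with lower levels and thus produces the $m$-th basis relation. The main obstacle of the entire argument will be this rank computation for general $m$: one needs closed-form expressions for the leading polynomial coefficients of the four series valid uniformly in $s$, together with a proof of nondegeneracy of the resulting $3 \times 3$ minors. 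A secondary technical burden is bookkeeping the $q$-series polynomial expansions, since the degree of $P_m$ grows linearly in $m$ while the monomial count grows cubically, so the calculation must be organized (e.g., by a homogeneous weighting that respects the action of $q \mapsto -q$) in order to isolate the needed minors.
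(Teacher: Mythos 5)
Your outline follows the same broad strategy as the paper (rewrite everything as polynomials in the algebraically independent quantities $k$, $K/\pi$, $E/\pi$, exploit a triangular structure graded by the power of $(2K/\pi)^{2j}$, and prove matching upper and lower bounds on $\dim_{\mathbb{Q}}V_m$), and you correctly locate where the difficulty sits. But the proposal stops exactly at the point where the actual work begins, and it misses the one idea that makes the argument close. At each level $j$ the coefficient of $(2K/\pi)^{2j}$ in the relation is a $\mathbb{Q}$-linear combination of the four polynomials $\Theta_{j-1}^{-},\Theta_{j-1}^{+},\Lambda_{j-1}^{-},\Lambda_{j-1}^{+}$ in $k^2$, and the whole theorem hinges on the space of linear dependencies among these four being \emph{exactly} one-dimensional for every $j$. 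You flag the ``rank $3$'' computation as the main obstacle but offer no mechanism for either inequality. Rank $\geq 3$ is handled in the paper by closed-form expressions for the constant, quadratic, quartic and top coefficients of these polynomials (Lemma \ref{lem: explicit expressions for first coefficients of auxilliary poly}) followed by a row reduction; you would need the analogous uniform-in-$j$ coefficient formulas, which your sketch does not supply. More importantly, rank $\leq 3$ --- i.e.\ the \emph{existence} of a nontrivial relation $(2^{2j+1}-1)\Theta_j^-+2^{2j+1}\Theta_j^+-\Lambda_j^-=0$ at every level --- is not a linear-algebra fact at all: after multiplying by $z^{2j}$ and summing over $j$ it is equivalent to a nontrivial identity among Jacobi elliptic functions, namely the duplication-type formula $4\,\mathrm{ns}^2(2z)=(1-k^2)(\mathrm{nc}^2 z-\mathrm{nd}^2 z)+(\mathrm{ns}^2 z-\mathrm{dn}^2 z)+(2+k^2)$ of Lemma \ref{Lem: Elliptic function identity}. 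Without this input your lower bound has no source of relations beyond the three small examples, and your upper bound cannot rule out that some level contributes rank $4$ (which would make $\dim V_m<m$) or only rank $2$.

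A second, lesser gap is in the lower-bound induction. The per-level one-dimensional relations are not independent: they are coupled through the low-degree part of the expansion (the terms in $1$, $(2K/\pi)^2$, $(2K/\pi)^2(2k^2-1)$, $(2K/\pi)^2(6E/K-5+4k^2)$, i.e.\ the first four rows of the paper's matrix $A$), which imposes four additional conditions linking all levels. Your phrase ``compatible with lower levels'' glosses over this; the paper has to carry free scaling parameters $\tau_1,\dots,\tau_m$ through a bottom-up construction of the kernel vector and spend them at the end to kill these four conditions, which is precisely why the induction only starts at $m\geq 6$ and the cases $m\leq 6$ need separate (partly computational) treatment. Finally, your heuristic that the parity vanishings \eqref{eq: additional assertion in Stein Conjecture} follow from the $q\mapsto -q$ symmetry is not what happens: they follow from the specific shape $(1-2^{2j+1},\,2^{2j+1},\,1,\,0)$ of the spanning vector of the dependency space, whose vanishing last entry lands on $\Psi_{2s}$ or $\Psi_{2s}^*$ according to the parity of $s$.
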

The goal of this paper is to prove this conjecture.
\begin{thm}\label{Thm1}
Conjecture \ref{conj: Stein} is true.
\end{thm}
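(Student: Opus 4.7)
The strategy is to lift (\ref{eq: defining equation of the vectorspace in question}) to an identity in a polynomial ring. As recalled in Section~\ref{Sec1}, Zucker's identities convert each of $\Phi_{2s},\Phi_{2s}^*,\Psi_{2s},\Psi_{2s}^*$ into a rational combination of the $q$-series $A_{2j+1},B_{2j+1},C_{2j+1},D_{2j+1}$ evaluated at a specific algebraic $q$; the Ramanujan parameterizations then express each such $q$-series as a polynomial in $P(q^2),Q(q^2),R(q^2)$, and therefore in $K/\pi$, $E/\pi$, $k$. Nesterenko's theorem guarantees that these three quantities are algebraically independent over $\mathbb{Q}$, so the rational relation~(\ref{eq: defining equation of the vectorspace in question}) is equivalent to a \emph{formal} polynomial identity in three indeterminates $X,Y,Z$ among polynomial avatars $\widetilde{\Phi}_{2s},\widetilde{\Phi}^*_{2s},\widetilde{\Psi}_{2s},\widetilde{\Psi}^*_{2s}\in\mathbb{Q}[X,Y,Z]$. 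Conjecture~\ref{conj: Stein} thereby becomes a purely linear-algebraic statement about the $\mathbb{Q}$-span of these $4m$ polynomials.

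A natural grading organizes the analysis: assigning $X$ and $Y$ weight $1$ and $Z$ weight $0$ makes $P,Q,R$ isobaric of weights $2,4,6$, and each avatar at level $s$ isobaric of weight $2s$. All relations in $V_m$ therefore split by level, and the analysis reduces to studying one weight at a time. The parity statement~(\ref{eq: additional assertion in Stein Conjecture}) should emerge from the $q\mapsto q^2$ substitution used to pass from the $U_n,V_n$-sums to the $q$-series, together with doubling identities of the type $C_{2j+1}(q)=A_{2j+1}(q)+2^{2j+1}A_{2j+1}(q^2)$, which act asymmetrically on the four families as a function of the parity of $s$. Concretely, I would isolate at each level $s$ a ``signature'' monomial in $X,Y,Z$ of weight $2s$ that appears with a nonzero coefficient in $\widetilde{\Psi}_{2s}$ when $s$ is odd (respectively $\widetilde{\Psi}^*_{2s}$ when $s$ is even), but in none of the other $4m-1$ avatars. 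Reading off the coefficient of that monomial in the polynomial identity forces $t_{4s-1}=0$, respectively $t_{4s}=0$, establishing~(\ref{eq: additional assertion in Stein Conjecture}).

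For the dimension count, I would argue inductively in $s$. The lower bound $\dim_{\mathbb{Q}}V_m\ge m$ comes from one new relation per level: the base case $s=1$ is~(\ref{EQ1.30}), Stein's two explicit examples handle $s=2,3$, and an inductive construction can be obtained by combining the three parity-allowed level-$s$ avatars with lower-level avatars via the Ramanujan differential relations satisfied by $P,Q,R$. For the matching upper bound $\dim_{\mathbb{Q}}V_m\le m$, I would pair the three surviving level-$s$ avatars against three judiciously chosen weight-$2s$ monomials to produce a block-triangular $3m\times 3m$ coefficient matrix with each diagonal $3\times 3$ block of nonzero determinant, ruling out any additional relation. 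I expect the main obstacle to be this last nonvanishing: a Jacobian-type calculation in the spirit of~\cite{Elsner5} that requires sufficiently tight control on the leading monomial structure of the $q$-series, uniformly in $s$. Matching Stein's computed examples for small $m$ will both guide the choice of signature monomials in the parity step and serve as a sanity check on the inductive construction.
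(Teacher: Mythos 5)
Your overall strategy---pass to a formal polynomial identity in $k$, $K/\pi$, $E/\pi$ via Nesterenko, split by a grading, and then do linear algebra on a block-triangular coefficient matrix with rank-$3$ diagonal blocks---is the same skeleton as the paper's proof. But there is a genuine error at the very first structural step. The avatars $\widetilde{\Phi}_{2s}$, etc., are \emph{not} isobaric of weight $2s$: by the explicit formulae (\ref{Phi.even})--(\ref{Psi*.odd}), each level-$s$ quantity is a sum of terms carrying $(2K/\pi)^{2j+2}$ for \emph{every} $j=0,1,\dots,s-1$, plus a rational constant. Consequently the weight-$2j$ component of the relation (\ref{eq: defining equation of the vectorspace in question}) couples the coefficients $t_{4i-3},\dots,t_{4i}$ for \emph{all} levels $i\geq j$, and the analysis does not reduce to ``one weight at a time'' with only the level-$s$ unknowns present. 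This cross-level coupling is exactly what the paper's matrix $A$ and its quasi-periodicity (Lemma \ref{Lem1}) are built to handle, via column operations that decouple the blocks; without that step your block-triangular matrix does not yet exist.

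Two further points are asserted rather than proved, and they are where the real work lies. First, for the parity statement (\ref{eq: additional assertion in Stein Conjecture}) there is no single ``signature monomial'' occurring in only one avatar; what is actually needed is that the space of linear dependencies among $\Theta_j^-,\Theta_j^+,\Lambda_j^-,\Lambda_j^+$ is \emph{exactly} one-dimensional and that its spanning vector has a zero in the $\Lambda_j^+$ slot (Lemma \ref{lem: subspace of linear dependencies of auxiliary polynomials}). Proving this requires both an upper bound on the kernel (explicit coefficient formulae for $\alpha_{j,i},\beta_{j,i},\gamma_{j,i},\delta_{j,i}$) and the existence of the dependency, which the paper reduces to the elliptic doubling identity $4\,\mathrm{ns}^2(2z)=(1-k^2)(\mathrm{nc}^2 z-\mathrm{nd}^2 z)+(\mathrm{ns}^2 z-\mathrm{dn}^2 z)+(2+k^2)$; your proposed $q$-series doubling formula is the right flavor but is neither stated precisely nor verified, and the uniqueness half is missing entirely. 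Second, the lower bound $\dim_{\Q}V_m\geq m$ does not follow from ``one new relation per level via the Ramanujan differential relations'': because of the cross-level coupling, a kernel vector must be constructed recursively from the bottom block upward, solving a $4\times 4$ system at each level and finally adjusting free parameters to kill the four low-weight equations --- a step that in the paper needs $m\geq 6$, with $m\leq 5$ handled by explicit examples and computation. As it stands the proposal is a plausible roadmap, but the decoupling step is wrong as stated and the two key lemmas it would rest on are not supplied.
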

For the proof of this theorem, we exploit many 
auxiliary results which are already provided 
by the theory sketched above for the transcendence and algebraic
independence results of the numbers from $\Omega$. 
All these tools are contained in \cite{Stein}, 
so that we can always refer the reader to this Ph.D. thesis. 
The following lines outline the key steps in our proof, 
and thus the structure of the present paper.
\paragraph*{Organization of the paper.}
The reasoning of our approach can be outlined as follows. 
Firstly, we make use of the known explicit formulae for a number 
$\xi \in \{ \Phi_{2s},\Phi_{2s}^*,
\Psi_{2s},\Psi_{2s}^* \}$. Indeed, $\xi$ can be written as a nonzero,
multivariate polynomial $P^{(\xi)}(E/\pi, K/\pi,k)$ 
in the algebraically independent numbers $E/\pi, K/\pi,k$ 
given by the complete elliptic integrals in (\ref{EQ1.20}). 
All of this is detailed in the Sections \ref{Sec2} and \ref{Sec3}.
Thereafter, we observe that $P^{(\xi)}(E/\pi, K/\pi,k)$ is essentially a linear form
in (precisely) one of four auxiliary polynomials 
$\Theta_j^{\pm},\Lambda_j^{\pm}$. Then, by degree considerations 
and by analysing the structure of $P^{(\xi)}(E/\pi, K/\pi,k)$ 
for all possible values of $\xi$, the problem of characterizing that
$\underline{t}\in V_m$ can be translated to investigating the vector space
of $(a,b,c,d)\in {\Q}^4$ satisfying
\[a\Theta_j^+ + b\Theta_j^- + c\Lambda_j^+ + d\Lambda_j^- \,=\, 0\,,\]
i.e., to determine linear dependencies between the auxiliary polynomials 
which is the content of Section \ref{Sec4}.
In Section \ref{Sec5}, we use the fact that $\Theta_j^{\pm},\Lambda_j^{\pm}$ 
originate from the Laurent series of the Jacobi elliptic functions, to show that 
determining the linear dependencies of $\Theta_j^{\pm},\Lambda_j^{\pm}$ 
can be reduced to proving an identity\footnote
{Said identity is seen, after some manipulations, to be equivalent 
to an analogue (cf. Lemma \ref{Lem: Elliptic function identity})
of the well-known doubling formula $\sin(2z) = 2\sin(z)\cos(z), z\in \mathbb{C},$
for the Jacobi elliptic function $\mathrm{sn}$.}
between the Jacobi elliptic functions.
After this intermediate steps are done, we can give the proof of Theorem \ref{Thm1} for 
all sufficiently large $m$, and deal with the finitely many remaining cases in the appendix.

\section{Notation and Preliminaries} \label{Sec2}
Let $s\geq 1$, and let $\xi$ denote one 
of the numbers from the set 
$\Omega_s :=\{ \Phi_{2s},\Phi_{2s}^*,\Psi_{2s},\Psi_{2s}^* \}$. 
We start by defining the notation needed to rewrite $\xi$ 
in a way suitable to our purposes, and we shall briefly describe 
the maxim of the rewriting. With the given algebraic number $\b$, 
which defines the sequences $U_n$ and $V_n$ for 
$\Phi_{2j},\Phi_{2j}^*,\Psi_{2j},\Psi_{2j}^*$,
we obtain an uniquely determined modulus $k$ from the identity
\[\beta^2 \,=\, \exp \Big( \,-\frac{\pi K(\sqrt{1-k^2})}{K(k)}\,\Big) \,,\]
cf. \cite[Sec.\,1.4]{Stein} and (\ref{EQ1.20}). Thus, $E,K,k$ are fixed. 
Next, we need the Jacobi elliptic functions (\cite{Byrd}). 
Let $sn(z,k)=sn(z)$ be the sine amplitude function obtained 
by inverting the meromorphic map
\[z \,\to \,\int_0^z \frac{dt}{\sqrt{(1-t^2)(1-k^2t^2)}} \,.\]
Then, using Glaisher's notation, let
\[\begin{array}{lcllcl}
\ns &:=& \displaystyle \frac{1}{\sn}\,,\quad & \nc &:=& \displaystyle \frac{1}{1-\sn} \,,\\ \\
\nd &:=& 1-k^2\sn \,,\quad & \dn &:=& \displaystyle \frac{1}{1-k^2\sn} \,.
\end{array} \]    
Moreover, we need the following power series expansions. 
\begin{eqnarray*}
\ns - \frac{1}{z^2} - \frac{1}{3}(1+k^2) &=& \sum_{j\geq 1} c_j(k)z^{2j} \,,\\
(1-k^2)\big( \nd -1 \big) &=& \sum_{j\geq 1} d_j(k)z^{2j} \,,\\
(1-k^2)\big( \nc -1 \big) &=& \sum_{j\geq 1} e_j(k)z^{2j} \,,\\
\dn -1 &=& \sum_{j\geq 1} f_j(k)z^{2j} \,.
\end{eqnarray*}
The coefficients $c_j,d_j,e_j,f_j$ are polynomials in the elliptic modulus $k^2$. 
Each such polynomial can be computed recursively; the recurrence formulae are
consequences of nonlinear differential equations satisfied by $\ns, \nd, \nc$, 
and $\dn$, cf. the Lemmata\,3.1 to 3.4 in \cite[p.\,17-20]{Stein}, or \cite{Elsner2}.    
Moreover, the following auxiliary polynomials
\begin{align}
\Theta_{j-1}^{-}\left(k\right) & := c_{j-1}\left(k\right)-d_{j-1}\left(k\right)
=:\sum_{i=0}^{j}\alpha_{j,i}k^{2i},\label{eq: definition of ThetaMinus}\\
\Theta_{j-1}^{+}\left(k\right) & := c_{j-1}\left(k\right)+d_{j-1}\left(k\right)
=:\sum_{i=0}^{j}\beta_{j,i}k^{2i},\label{eq: definition of ThetaPlus}\\
\Lambda_{j-1}^{-}\left(k\right) & := e_{j-1}\left(k\right)-f_{j-1}\left(k\right)
=:\sum_{i=0}^{j}\gamma_{j,i}k^{2i},\label{eq: definition of LambdaMinus}\\
\Lambda_{j-1}^{+}\left(k\right) & := e_{j-1}\left(k\right)+f_{j-1}\left(k\right)
=:\sum_{i=0}^{j}\delta_{j,i}k^{2i}\label{eq: definition of LambdaPlus}
\end{align}
for $j\geq2$ will play a fundamental role in stating the explicit formulae 
for $\xi \in \Omega_j$. We note that for $s\geq 1$ any 
$\xi\in\left\{ \Phi_{2s},\Phi_{2s}^{*},\Psi_{2s},\Psi_{2s}^{*}\right\} $ 
can be written as 
\begin{equation}
\xi=P^{\left(\xi\right)}\left(k,\frac{K}{\pi},\frac{E}{\pi}\right)
=\mathbf{P}_{I}^{\left(\xi\right)}
\left( k,\frac{K}{\pi}\right)
+\mathbf{P}_{II}^{\left(\xi\right)}\left(k,\frac{K}{\pi},\frac{E}{\pi}\right)
\label{eq: spliting of xi in in first polynomial and second}
\end{equation}
where the multivariate polynomials 
$\mathbf{P}_{I}^{\left(\xi\right)},\mathbf{P}_{II}^{\left(\xi\right)}$
are defined precisely in the subsequent section. 
The idea behind this decomposition of $\xi$ is that the second summand 
$P_{II}^{(\xi)}$ is a 
multivariate polynomial in $k,K/\pi,E/\pi$, 
which gathers four kinds of terms, namely, 
all terms that are either rational or are rational multiples of
\[{\Big( \,\frac{2K}{\pi}\,\Big)}^2\,,\quad {\Big( \,\frac{2K}{\pi}\,\Big)}^2(2k^2-1)\,,
\quad {\Big( \,\frac{2K}{\pi}\,\Big)}^2\Big( \,\frac{6E}{K}-5+4k^2\,\Big)\,,
\quad {\Big( \,\frac{2K}{\pi}\,\Big)}^2\Big( \,\frac{2E}{K} -1\,\Big)\,;\]
note that 
\[{\Big( \,\frac{2K}{\pi}\,\Big)}^2 \frac{E}{K} \,
=\, \Big( \,\frac{2K}{\pi}\,\Big) \Big( \,\frac{2E}{\pi}\,\Big) \,.\]
The first summand $P_{I}^{(\xi)}$ is also a multivariate polynomial, 
however in $k,K/\pi$ only, and gathers all rational multiples of the higher powers
\[{\Big( \,\frac{2K}{\pi}\,\Big)}^{2j}k^{2i} \,,\]
where $j\geq 2$ and $i\in \{ 0,1,\dots,j \}$. \\

Finally, we need the rational numbers $a_j$ and $b_j$ defined 
by the series expansions of the circular functions $\mbox{cosec}^2 z$ 
and $\mbox{sec}^2 z$, 
respectively, given by
\[\mbox{cosec}^2 z \,=\, \frac{1}{z^2} + \sum_{j\geq 0} a_jz^{2j}\,,\qquad 
a_j \,=\, \frac{{(-1)}^j (2j+1)2^{2j+2}B_{2j+2}}{(2j+2)!} \quad (j\geq 0) \,,\]
and
\[\mbox{sec}^2 z \,=\, \sum_{j\geq 0} b_jz^{2j}\,,\qquad 
b_j \,=\, \frac{{(-1)}^j (2j+1)2^{2j+2}(2^{2j+2}-1)B_{2j+2}}{(2j+2)!} \quad (j\geq 0) \,.\]
Here, $B_2=1/6$, $B_4=-1/30$, $B_6=1/42$, \dots denote the Bernoulli numbers. 
The explicit formulae for $\xi$ involve coefficients $\sigma_i(s)$, which are the 
elementary symmetric functions of the $(s-1)$ numbers $-1^2,-2^2,\dots,-{(s-1)}^2$ 
for $s\geq 2$. They are defined by $\sigma_0(s):=1$, 
and for $i\geq 1$ and $j=1,\dots,s-1$ by
\[\sigma_{i}\left(s\right) \,:=\, {(-1)}^i 
\sum_{1\leq r_{1}<\ldots<r_{i}\leq s-1}r_{1}^{2}\ldots r_{i}^{2} \,.\]
Now, we state the explicit formulae for $\xi$ in terms of $k,K/\pi$, 
and $E/\pi$, cf. \cite[Sec.\,3.2]{Stein}. 
\begin{eqnarray}
\Phi_{2s} &=& \frac{1}{(2s-1)!}\left[ \,-\frac{{(s-1)!}^2}{24}
\left( \,1 - {\Big( \,\frac{2K}{\pi}\,\Big)}^2\Big( \,\frac{6E}{K}-5+4k^2\,\Big)\,
\right) \right. \nonumber \\
&&\left. +\,\sum_{j=1}^{s-1} \sigma_{s-j-1}(s) 
\frac{{(-1)}^j(2j)!}{2^{2j+3}}\Big( \,a_j - {\Big( \,
\frac{2K}{\pi}\,\Big)}^{2j+2}\Theta_j^-(k)\,\Big) \,\right]
\qquad \mbox{(for $s$ even)}\,,
\label{Phi.even} \\
\Phi_{2s} &=& \frac{1}{(2s-1)!}\left[ \,\frac{{(s-1)!}^2}{24}
\left( \,1 - {\Big( \,\frac{2K}{\pi}\,\Big)}^2\big( 1 - 2k^2 \big)\,\right) \right. 
\nonumber \\
&&\left. +\,\sum_{j=1}^{s-1} \sigma_{s-j-1}(s) 
\frac{{(-1)}^j(2j)!}{2^{2j+3}}\Big( \,a_j - {\Big( \,
\frac{2K}{\pi}\,\Big)}^{2j+2}\Theta_j^+(k)\,\Big) \,\right]
\qquad \mbox{(for $s$ odd)}\,,
\label{Phi.odd} 
\end{eqnarray}
\begin{eqnarray}
\Phi_{2s}^* &=& \frac{1}{(2s-1)!}\left[ \,
\frac{{(s-1)!}^2}{24}\left( \,1 - {\Big( \,
\frac{2K}{\pi}\,\Big)}^2\big( 1 - 2k^2 \big)\,\right) \right. 
\nonumber \\
&&\left. -\,\sum_{j=1}^{s-1} \sigma_{s-j-1}(s) 
\frac{{(-1)}^j(2j)!}{2^{2j+3}}\Big( \,a_j - {\Big( \,
\frac{2K}{\pi}\,\Big)}^{2j+2}\Theta_j^+(k)\,\Big) \,\right]
\qquad \mbox{(for $s$ even)}\,,
\label{Phi*.even} \\
\Phi_{2s}^* &=& \frac{1}{(2s-1)!}\left[ \,-\frac{{(s-1)!}^2}{24}
\left( \,1 - {\Big( \,\frac{2K}{\pi}\,\Big)}^2\Big( \,
\frac{6E}{K}-5+4k^2\,\Big)\,\right) \right. 
\nonumber \\
&&\left. -\,\sum_{j=1}^{s-1} \sigma_{s-j-1}(s) \frac{{(-1)}^j(2j)!}{2^{2j+3}}
\Big( \,a_j - {\Big( \,\frac{2K}{\pi}\,\Big)}^{2j+2}\Theta_j^-(k)\,
\Big) \,\right]
\qquad \mbox{(for $s$ odd)}\,,
\label{Phi*.odd} \\
\Psi_{2s} &=& \frac{1}{(2s-1)!}\left[ \,-\frac{{(s-1)!}^2}{8}\left( \,
1 + {\Big( \,\frac{2K}{\pi}\,\Big)}^2\Big( \,1 - \frac{2E}{K}\,\Big)\,\right) \right. 
\nonumber \\
&&\left. +\,\sum_{j=1}^{s-1} \sigma_{s-j-1}(s) 
\frac{{(-1)}^j(2j)!}{2^{2j+3}}\Big( \,b_j - {\Big( \,
\frac{2K}{\pi}\,\Big)}^{2j+2}\Lambda_j^-(k)\,\Big) \,\right]
\qquad \mbox{(for $s$ even)}\,,
\label{Psi.even} \\
\Psi_{2s} &=& \frac{1}{(2s-1)!}\left[ \,\frac{{(s-1)!}^2}{8} 
\left( \,{\Big( \,\frac{2K}{\pi}\,\Big)}^2 - 1\,\right) \right. 
\nonumber \\
&&\left. +\,\sum_{j=1}^{s-1} \sigma_{s-j-1}(s) 
\frac{{(-1)}^j(2j)!}{2^{2j+3}}\Big( \,{\Big( \,\frac{2K}{\pi}\,\Big)}^{2j+2}
\Lambda_j^+(k) - b_j\,\Big) \,\right]
\qquad \mbox{(for $s$ odd)}\,,
\label{Psi.odd} \\
\Psi_{2s}^* &=& \frac{1}{(2s-1)!}\left[ \,-\frac{{(s-1)!}^2}{8} 
\left( \,{\Big( \,\frac{2K}{\pi}\,\Big)}^2 - 1\,\right) \right. 
\nonumber \\
&&\left. +\,\sum_{j=1}^{s-1} \sigma_{s-j-1}(s) 
\frac{{(-1)}^j(2j)!}{2^{2j+3}}\Big( \,{\Big( \,\frac{2K}{\pi}\,\Big)}^{2j+2}
\Lambda_j^+(k) - b_j\,\Big) \,\right]
\qquad \mbox{(for $s$ even)}\,,
\label{Psi*.even} \\
\Psi_{2s}^* &=& \frac{1}{(2s-1)!}\left[ \,
\frac{{(s-1)!}^2}{8}\left( \,1 + {\Big( \,\frac{2K}{\pi}\,\Big)}^2
\Big( \,1 - \frac{2E}{K}\,\Big)\,\right) \right. 
\nonumber \\
&&\left. +\,\sum_{j=1}^{s-1} \sigma_{s-j-1}(s) 
\frac{{(-1)}^j(2j)!}{2^{2j+3}}\Big( \,b_j - {\Big( \,
\frac{2K}{\pi}\,\Big)}^{2j+2}\Lambda_j^-(k)\,\Big) \,\right]
\qquad \mbox{(for $s$ odd)}\,.
\label{Psi*.odd}
\end{eqnarray}

\newpage

\section{Definition of $\mathbf{P}_{I}^{\left(\xi\right)}$ and $\mathbf{P}_{II}^{\left(\xi\right)}$} \label{Sec3}
Let us define $\mathbf{P}_{I}^{\left(\xi\right)}$ first. Fix $\xi\in\left\{ \Phi_{2s},\Phi_{2s}^{*},\Psi_{2s},\Psi_{2s}^{*}\right\} $,
for $s\geq1$. Then,
\begin{equation}
\mathbf{P}_{I}^{\left(\xi\right)}\left(k,\frac{K}{\pi}\right):=\sum_{j=1}^{s-1}\left(
\frac{2K}{\pi}\right)^{2j+2}w_{j}^{\left(s\right)}P_{j}^{\left(\xi\right)}\left(k\right)\,,\label{eq: shape of the first auxiliary polynomial}
\end{equation}
where $w_{j}^{\left(s\right)}$ is the rational ``weight factor''
\[w_{j}^{\left(s\right)}:= \left(-1\right)^{j}\frac{\sigma_{s-j-1}\left(s\right)}{2^{2j+3}\left(2s-1\right)!}\left(2j\right)! \qquad (j=1,\dots,s-1)\,.\]
$P_{j}^{\left(\xi\right)}\in\mathbb{Q}\left[x\right]$ is given by 
\begin{align}
{\bf P}_j^{(s)} \,:=\,
\left(P_{j}^{\left(\Phi_{2s}\right)},P_{j}^{\left(\Phi_{2s}^{*}\right)},P_{j}^{\left(\Psi_{2s}\right)},P_{j}^{\left(\Psi_{2s}^{*}\right)}\right):=\begin{cases}
\big( -\Theta_{j}^{-},\Theta_{j}^{+},-\varLambda_{j}^{-},\varLambda_{j}^{+}\,\big) & \mbox{for}\,s\,\mbox{even},\\
\big( -\Theta_{j}^{+},\Theta_{j}^{-},\varLambda_{j}^{+},-\varLambda_{j}^{-}\,\big) & \mbox{for}\,s\,\mbox{odd}
\end{cases}\label{eq: definition of the auxiliary polynomials}
\end{align}
for $j\in\left\{ 1,\ldots,s-1\right\} $. Let us now define $\mathbf{P}_{II}^{\left(\xi\right)}$.
Letting 
\[
\hat{w}_{s}:=\frac{\left(s-1\right)!^{2}}{24\left(2s-1\right)!}
\]
denote another rational weight factor, we have
\[
\mathbf{P}_{II}^{\left(\xi\right)}\left(k,\frac{K}{\pi},\frac{E}{\pi}\right):=\hat{w}_{s}\left(\frac{2K}{\pi}\right)^{2}P_{0}^{\left(\xi\right)}\left(k,
\frac{K}{\pi},\frac{E}{\pi}\right)+R^{\left(\xi\right)}
\]
where $P_{0}^{\left(\xi\right)}$ is the ``initial'' polynomial
given by
\begin{align}
\left(P_{0}^{\left(\Phi_{2s}\right)},P_{0}^{\left(\Phi_{2s}^{*}\right)},
P_{0}^{\left(\Psi_{2s}\right)},P_{0}^{\left(\Psi_{2s}^{*}\right)}\right) & :=\begin{cases}
\left(\frac{6E}{K}-5+4k^{2},\,2k^{2}-1,\,\frac{6E}{K}-3,\,-3\right) & 
\mathrm{for\,}s\,\mathrm{even},\\
\left(2k^{2}-1,\,\frac{6E}{K}-5+4k^{2},\,3,\,3-\frac{6E}{K}\right) & 
\mathrm{for\,}s\,\mathrm{odd}.
\end{cases}\label{eq: initial polynomial of Phi of s}
\end{align}
Moreover, $R^{\left(\xi\right)}$ is the rational number 
\begin{align}
R^{\left(\Phi_{2s}\right)} & :=\left(-1\right)^{s+1}\hat{w}_{s}
+\sum_{j=1}^{s-1}a_{j}w_{j}^{\left(s\right)}=:-R^{\left(\Phi_{2s}^{*}\right)},
\label{eq: rational remainder of Phi and Phi star of s}\\
R^{\left(\Psi_{2s}\right)} & :=-3\hat{w}_{s}+\left(-1\right)^{s}
\sum_{j=1}^{s-1}b_{j}w_{j}^{\left(s\right)}=:-R^{\left(\Psi_{2s}^{*}\right)}
\label{eq: rational remainder of Psi and Psi star of s} \,.
\end{align}

\section{Connection to linear dependencies of $\Theta_{j}^{\pm},\Lambda_{j}^{\pm}$} \label{Sec4}
In this section, we demonstrate how the 
linear dependencies in (\ref{eq: defining equation of the vectorspace in question})
translate to linear dependencies of the auxiliary polynomials 
$\Theta_{j}^{\pm},\Lambda_{j}^{\pm}$.
We define 
\begin{align*}
x_{1} & :=1, & x_{2} & :=\left(\frac{2K}{\pi}\right)^{2},\\
x_{3} & :=\left(\frac{2K}{\pi}\right)^{2}\left(2k^{2}-1\right), 
& x_{4} & :=\left(\frac{2K}{\pi}\right)^{2}\left(\frac{6E}{K}-5+4k^{2}\right),
\end{align*}
and, for $2\leq j\leq m$, we let 
\begin{align*}
x_{4j-3} & :=\left(\frac{2K}{\pi}\right)^{2j} \cdot 
\begin{cases}
	\Theta_{j-1}^{-}\left(k\right) & \mathrm{for\,}j\,\mathrm{even},\\
	\Theta_{j-1}^{+}\left(k\right) & \mathrm{for\,}j\,\mathrm{odd},
\end{cases} 
& x_{4j-2} & :=\left(\frac{2K}{\pi}\right)^{2j} \cdot 
\begin{cases}
	\Theta_{j-1}^{+}\left(k\right) & \mathrm{for\,}j\,\mathrm{even},\\
	\Theta_{j-1}^{-}\left(k\right) & \mathrm{for\,}j\,\mathrm{odd},
\end{cases}\\
x_{4j-1} & :=\left(\frac{2K}{\pi}\right)^{2j} \cdot 
\begin{cases}
	\Lambda_{j-1}^{-}\left(k\right) & \mathrm{for\,}j\,\mathrm{even},\\
	\Lambda_{j-1}^{+}\left(k\right) & \mathrm{for\,}j\,\mathrm{odd},
\end{cases} & x_{4j} & :=\left(\frac{2K}{\pi}\right)^{2j} \cdot 
\begin{cases}
	\Lambda_{j-1}^{+}\left(k\right) & \mathrm{for\,}j\,\mathrm{even},\\
	\Lambda_{j-1}^{-}\left(k\right) & \mathrm{for\,}j\,\mathrm{odd}.
\end{cases}
\end{align*}
For the ease of exposition, we put 
$\mathbf{P}_{I}^{\left(\xi\right)}:=\mathbf{P}_{I}^{\left(\xi\right)}
\left(k,\frac{K}{\pi}\right)$
and $\mathbf{P}_{II}^{\left(\xi\right)}:=\mathbf{P}_{II}^{\left(\xi\right)}
\left(k,\frac{K}{\pi},\frac{E}{\pi}\right)$
for any $\xi\in\left\{ \Phi_{2s},\Phi_{2s}^{*},\right.$ $\left.\Psi_{2s},\Psi_{2s}^{*}\right\} $.
We restate (\ref{eq: defining equation of the vectorspace in question})
by collecting all terms involving $x_{1},\ldots,x_{4}$, and thus
conclude via the algebraic independence of $k,\frac{K}{\pi},\frac{E}{\pi}$
over $\mathbb{Q}$ that
\begin{equation}
\sum_{i=1}^{m}\left(\mathbf{P}_{II}^{\left(\Phi_{2i}\right)}t_{4i-3}
+\mathbf{P}_{II}^{\left(\Phi_{2i}^{*}\right)}t_{4i-2}
+\mathbf{P}_{II}^{\left(\Psi_{2i}\right)}t_{4i-1}
+\mathbf{P}_{II}^{\left(\Psi_{2i}^{*}\right)}t_{4i}\right)=0.
\label{eq: vorspann}
\end{equation}
Moreover, we rearrange the remaining part of 
(\ref{eq: defining equation of the vectorspace in question}),
namely
\[
\sum_{i=2}^{m}\left(\mathbf{P}_{I}^{\left(\Phi_{2i}\right)}t_{4i-3}
+\mathbf{P}_{I}^{\left(\Phi_{2i}^{*}\right)}t_{4i-2}
+\mathbf{P}_{I}^{\left(\Psi_{2i}\right)}
t_{4i-1}+\mathbf{P}_{I}^{\left(\Psi_{2i}^{*}\right)}t_{4i}
\right),
\]
first to
\[
\sum_{i=2}^{m}\sum_{j=2}^{i}
\left(
\frac{2K}{\pi}\right)^{2j}w_{j-1}^{\left(i\right)}
\left(P_{j-1}^{\left(\Phi_{2i}\right)}\left(k\right)t_{4i-3}+P_{j-1}^{\left(\Phi_{2i}^{*}\right)}
\left(k\right)t_{4i-2}+P_{j-1}^{\left(\Psi_{2i}\right)}
\left(k\right)t_{4i-1}+P_{j-1}^{\left(\Psi_{2i}^{*}\right)}\left(k\right)t_{4i}
\right).
\]
Then, by interchanging 
the order of summation, we deduce that
\[
\sum_{j=2}^{m}\sum_{i=j}^{m}\left(\frac{2K}{\pi}\right)^{2j}
w_{j-1}^{\left(i\right)}\left(P_{j-1}^{\left(\Phi_{2i}\right)}\left(k\right)t_{4i-3}
+P_{j-1}^{\left(\Phi_{2i}^{*}\right)}
\left(k\right)t_{4i-2}+P_{j-1}^{\left(\Psi_{2i}\right)}\left
(k\right)t_{4i-1}+P_{j-1}^{\left(\Psi_{2i}^{*}\right)}
\left(k\right)t_{4i}\right)=0.
\]
Again, by applying the algebraic independence of $k,\frac{K}{\pi},\frac{E}{\pi}$
over $\mathbb{Q}$, we infer, for $2\leq j\leq m$, that 
\begin{equation}
\sum_{i=j}^{m}\left(\frac{2K}{\pi}\right)^{2j}
w_{j-1}^{\left(i\right)}\left(P_{j-1}^{\left(\Phi_{2i}\right)}\left(k\right)t_{4i-3}
+P_{j-1}^{\left(\Phi_{2i}^{*}\right)}\left(k\right)t_{4i-2}
+P_{j-1}^{\left(\Psi_{2i}\right)}\left(k\right)t_{4i-1}
+P_{j-1}^{\left(\Psi_{2i}^{*}\right)}\left(k\right)t_{4i}\right)
=0.
\label{eq: hauptteil}
\end{equation}
Note that the Equations (\ref{eq: vorspann}) and (\ref{eq: hauptteil})
can be regarded as linear forms in $x_{1},\ldots,x_{4m}$.
Now, we define the matrix 
$A:=\left(a_{i,j}\right)\in\mathbb{C}^{\left(m+3\right)\times\left(4m\right)}$
in which each entry $a_{i,j}$ represents the factor in front of $x_{i}t_{j}$
from the Equations (\ref{eq: vorspann}) and (\ref{eq: hauptteil}).
\begin{rem}
\label{rem: number of zeros in the beginning of a line}For $i\geq5$,
the $i$-th row of $A$ starts with $4\left(i-4\right)$ many zeros. 
For future reference, we record that the $4\times 4$ submatrix 
$(a_{i,j})_{1\leq i,j\leq 4}$ of rank three is given by 
\begin{equation} \frac{1}{24}
		 \left( 
			\begin{array}{rrrr}
				1 & -1 & -3 &3 \\ 
				0 & 0 & 3 & 0\\ 
				1& 0 &0& 2\\ 
				0& 1 & 0& -1\\ 
			\end{array}
		\right) \label{matrix vorspann}.
\end{equation}
\end{rem}
The key issue for our proof of Conjecture \ref{conj: Stein} is to
analyse the kernel of $A$. For this purpose, we exhibit a ``quasi-periodicity''
property of the rows of $A$ in the subsequent lemma. For illustrating
the underlying structures, the reader can find an example, where $m=3$,
in the final section.
\begin{lem}
Let $m\geq3$, consider in the matrix $A$ four consecutive elements in the $\nu$-th
row, $v=1,\ldots,m-2$, as one vector, and let $l=1,\ldots,m-\nu-1$.
Then, there is a permutation $\pi=\pi_{\nu,l}$ of the numbers 
$\left\{ 4\left(\nu+l\right)+1,\ldots,4\left(\nu+l\right)+4\right\} $
such that the vectors $\left(a_{4+\nu,4\nu+1},a_{4+\nu,4\nu+2},
a_{4+\nu,4\nu+3},a_{4+\nu,4\nu+4}\right),$
and
\[
\left(a_{4+\nu,\pi\left(4\left(\nu+l\right)+1\right)},
a_{4+\nu,\pi\left(4\left(\nu+l\right)+2\right)},
a_{4+\nu,\pi\left(4\left(\nu+l\right)+3\right)},
a_{4+\nu,\pi\left(4\left(\nu+l\right)+4\right)}\right)
\]
are linearly dependent over $\mathbb{Q}$.
\label{Lem1}
\end{lem}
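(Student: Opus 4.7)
The plan is to identify the entries of row $4+\nu$ of $A$ explicitly, and then to exhibit the permutation $\pi_{\nu,l}$ case by case. From the definitions in Section~\ref{Sec4}, one sees that row $4+\nu$ corresponds to the single monomial $x_{4\nu+1}$, which equals $(2K/\pi)^{2(\nu+1)}\Theta_\nu^{-}$ if $\nu+1$ is even and $(2K/\pi)^{2(\nu+1)}\Theta_\nu^{+}$ if $\nu+1$ is odd. To compute $a_{4+\nu,c}$ I would expand the master equation \eqref{eq: defining equation of the vectorspace in question} via the splitting $\xi=\mathbf{P}_I^{(\xi)}+\mathbf{P}_{II}^{(\xi)}$ of Section~\ref{Sec3}, noting that $\mathbf{P}_{II}^{(\xi)}$ only involves $x_1,\ldots,x_4$ and thus contributes nothing for $\nu\geq 1$. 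All contributions therefore come from the $j=\nu$ terms of $\mathbf{P}_I^{(\xi)}$ and are governed by \eqref{eq: definition of the auxiliary polynomials}. Since $x_{4\nu+1}$ carries only a $\Theta$-factor, the $\Psi_{2s}$ and $\Psi_{2s}^{*}$ contributions vanish identically (they are $\Lambda$-polynomials), and among the $\Phi_{2s},\Phi_{2s}^{*}$ contributions only those whose $P_\nu^{(\xi)}$ matches the relevant $\Theta_\nu^{\pm}$ survive.

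Carefully tracking the signs in \eqref{eq: definition of the auxiliary polynomials} yields the following sparse pattern for row $4+\nu$: in the 4-block at columns $4i-3,\ldots,4i$ (for $i\geq\nu+1$) exactly one entry is nonzero, namely $-w_\nu^{(i)}$ at column $4i-3$ when $i$ has the same parity as $\nu+1$, and $+w_\nu^{(i)}$ at column $4i-2$ when $i$ has the opposite parity to $\nu+1$; the remaining three entries of each 4-block vanish.

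With this explicit description, both cases of the lemma become immediate. \textbf{When $l$ is even,} $i_1=\nu+1$ and $i_2=\nu+1+l$ share a parity, so both the first and the second block have their unique nonzero entry at position~1, with values $-w_\nu^{(i_1)}$ and $-w_\nu^{(i_2)}$ respectively; taking $\pi_{\nu,l}=\mathrm{id}$ exhibits $\mathbb{Q}$-proportionality with ratio $w_\nu^{(i_2)}/w_\nu^{(i_1)}$. \textbf{When $l$ is odd,} the second block's unique nonzero entry lies at position~2 with value $+w_\nu^{(i_2)}$; taking $\pi_{\nu,l}$ to be the transposition $(4(\nu+l)+1,\,4(\nu+l)+2)$ brings the second block to $(+w_\nu^{(i_2)},0,0,0)$, which is proportional to the first block $(-w_\nu^{(i_1)},0,0,0)$ over $\mathbb{Q}$ with ratio $-w_\nu^{(i_2)}/w_\nu^{(i_1)}$.

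The main obstacle is the careful bookkeeping of the parity-dependent signs in \eqref{eq: definition of the auxiliary polynomials}, together with the verification that $P_\nu^{(\Psi_{2s})}$ and $P_\nu^{(\Psi_{2s}^{*})}$ genuinely produce no $\Theta_\nu^{\pm}$-summand in the $x_{4\nu+1}$-coefficient. Once these sign and parity details are pinned down, each of the two cases reduces to a one-line verification.
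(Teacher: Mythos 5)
There is a genuine gap: you have misread the structure of the matrix $A$. You take row $4+\nu$ to be the coefficient row of the single variable $x_{4\nu+1}$, and from this you conclude that each $4$-block of that row has exactly one nonzero entry (in particular, that the columns belonging to $\Psi_{2i}$ and $\Psi_{2i}^{*}$ carry zeros because ``they are $\Lambda$-polynomials''). That is not how $A$ is built: since $A$ has only $m+3$ rows for $4m$ variables, each row of index $4+\nu$ with $\nu\geq1$ encodes the \emph{entire} equation \eqref{eq: hauptteil} for $j=\nu+1$, i.e.\ the whole group $x_{4\nu+1},\dots,x_{4\nu+4}$ at once, and the entry $a_{4+\nu,c}$ in the block belonging to the index $i$ is the polynomial $w_{\nu}^{(i)}P_{\nu}^{(\xi)}(k)$ with $\xi$ determined by the column $c$. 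By \eqref{eq: definition of the auxiliary polynomials} every such block contains all four nonzero polynomials $\Theta_{\nu}^{+},\Theta_{\nu}^{-},\Lambda_{\nu}^{+},\Lambda_{\nu}^{-}$ (each exactly once, with signs and order depending on the parity of $i$). The explicit matrix $A_{0}^{(3)}$ in Section~\ref{Sec7} shows this concretely: its fifth row has nonzero entries (rational multiples of $\Lambda_{1}^{-}=1$ and $\Lambda_{1}^{+}=1-2k^{2}$) in the columns belonging to $\Psi_{4}$ and $\Psi_{4}^{*}$, contradicting your sparsity claim. Your two-case analysis therefore describes a much sparser matrix than the one the lemma is about.

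The step you did get right --- that for $4+\nu\geq5$ only \eqref{eq: hauptteil} contributes, so $\mathbf{P}_{II}^{(\xi)}$ is irrelevant --- is also the paper's first step, but the argument then proceeds differently: one observes that each of $\Theta_{\nu}^{+},\Theta_{\nu}^{-},\Lambda_{\nu}^{+},\Lambda_{\nu}^{-}$ occurs exactly once in each block $w_{\nu}^{(i)}{\bf P}_{\nu}^{(i)}$, with a single nonzero rational weight $w_{\nu}^{(i)}$ common to the whole block. The permutation $\pi_{\nu,l}$ is the one matching equal polynomials between the block for $i=\nu+1$ and the block for $i=\nu+1+l$; after this matching every entry of the second block is a rational multiple of the corresponding entry of the first, which is precisely the dependence exploited by the column eliminations in Section~\ref{Sec6}. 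When $l$ is even the two blocks are literally proportional with $\pi=\mathrm{id}$ (that fragment of your write-up survives), but when $l$ is odd the required $\pi$ permutes all four positions within the block (it pairs $\mp\Theta_{\nu}^{-}$ with $\pm\Theta_{\nu}^{-}$, $\mp\Lambda_{\nu}^{-}$ with $\pm\Lambda_{\nu}^{-}$, etc.), not merely a single nonzero slot.
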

\begin{proof}
Because $4+\nu\geq5$, it suffices to consider only the $a_{i,j}$
arising from (\ref{eq: hauptteil}). We observe that each 
$\Theta_{3+\nu}^{+},\Theta_{3+\nu}^{-},\Lambda_{3+\nu}^{+},\Lambda_{3+\nu}^{-}$
occurs exactly once in the term 
\[
P_{j-1}^{\left(\Phi_{2i}\right)}\left(k\right)t_{4i-3}
+P_{j-1}^{\left(\Phi_{2i}^{*}\right)}\left(k\right)t_{4i-2}
+P_{j-1}^{\left(\Psi_{2i}\right)}\left(k\right)
t_{4i-1}+P_{j-1}^{\left(\Psi_{2i}^{*}\right)}\left(k\right)t_{4i}
\]
for any $j=4+\nu$ and any $i=4+\nu,\ldots,m$. This defines the permutation
$\pi_{\nu,l}$, and since the weight factor $w_{j-1}^{\left(i\right)}\neq0$
from (\ref{eq: hauptteil}) does not depend on 
$\xi\in\left\{ \Phi_{2i},\Phi_{2i}^{*},\Psi_{2i},\Psi_{2i}^{*}\right\} $,
the lemma follows.
\end{proof}

\section{Linear dependencies of $\Theta_{j}^{\pm},\Lambda_{j}^{\pm}$} \label{Sec5}
Due to Lemma 3.6 in the Ph.D. thesis \cite{Stein} of M. Stein, there are explicit
formulae for some coefficients of the auxiliary polynomials. We shall
use these formulae. 
\begin{lem}
\label{lem: explicit expressions for first coefficients of auxilliary poly}Let
$\Theta_{j}^{\pm},\,\Lambda_{j}^{\pm}$ be as in (\ref{eq: definition of ThetaMinus})
to (\ref{eq: definition of LambdaPlus}). Then,
\begin{align*}
\alpha_{j,0} & =a_{j-1}=\beta_{j,0},\qquad\gamma_{j,0}=b_{j-1}=\delta_{j,0}.
\end{align*}
By putting $\kappa_{j-1}:=\frac{\left(-1\right)^{j-1}2^{2j-3}}{\left(2j-2\right)!}$,
the coefficients of the quadratic terms are given by 
\begin{align*}
\alpha_{j,1} & =\kappa_{j-1}-\frac{j}{2}a_{j-1}=\beta_{j,1}+2\kappa_{j-1},\\
\gamma_{j,1} +2\kappa_{j-1} & =\kappa_{j-1}-\frac{j}{2}b_{j-1}=\delta_{j,1},
\end{align*}
the coefficients of the quartic terms, letting 
$\hat{\kappa}_{j-1}:=\frac{j\left(4j-7\right)}{32}$,
can be written as 
\begin{align*}
\alpha_{j,2} & =\frac{\kappa_{j-1}}{16}(7-8j-2^{2j-1})+\hat{\kappa}_{j-1}a_{j-1},\\
\beta_{j,2} & =\frac{\kappa_{j-1}}{16}(-9+8j+2^{2j-1})+\hat{\kappa}_{j-1}a_{j-1},\\
\gamma_{j,2} & =\frac{\kappa_{j-1}}{16}(-7+8j-2^{2j-1})+\hat{\kappa}_{j-1}b_{j-1},\\
\delta_{j,2} & =\frac{\kappa_{j-1}}{16}(9-8j+2^{2j-1})+\hat{\kappa}_{j-1}b_{j-1},
\end{align*}
and the coefficients at $z^{2j}$ are given by 
\[
\alpha_{j,j}=2^{2j}a_{j-1},\qquad\beta_{j,j}
=\left(2-2^{2j}\right)a_{j-1},\qquad\gamma_{j,j}=\delta_{j,j}
=0.
\]
\end{lem}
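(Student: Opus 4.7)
The approach is to prove each family of identities by analysing the power-series definitions of $c_{j-1},d_{j-1},e_{j-1},f_{j-1}$ via the recurrences from Lemmata~3.1--3.4 of \cite{Stein}, which come from the nonlinear ODEs satisfied by $\mathrm{ns}^2,\mathrm{nd}^2,\mathrm{nc}^2,\mathrm{dn}^2$. Each claim in the lemma then amounts to reading off a fixed order in $k^2$ of the $z^{2(j-1)}$-coefficient of one of these four series and comparing it with the circular expansions of $\mathrm{cosec}^2 z$ and $\mathrm{sec}^2 z$.

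For the constant terms, I would set $k=0$: since $\mathrm{sn}(z,0)=\sin z$ and $\mathrm{dn}(z,0)=1$, the $c$- and $e$-series collapse to $\mathrm{cosec}^2 z-1/z^2-1/3=\sum_{j\geq1}a_j z^{2j}$ and $\mathrm{sec}^2 z-1=\sum_{j\geq1}b_j z^{2j}$, while the $(1-k^2)$ prefactor together with the vanishing of $\mathrm{nd}^2-1$ and $\mathrm{dn}^2-1$ at $k=0$ forces the $d$- and $f$-series to be identically zero at $k=0$. The identities $\alpha_{j,0}=\beta_{j,0}=a_{j-1}$ and $\gamma_{j,0}=\delta_{j,0}=b_{j-1}$ follow at once.

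For the $k^2$- and $k^4$-coefficients, I would Taylor-expand $\mathrm{sn}(z,k),\mathrm{cn}(z,k),\mathrm{dn}(z,k)$ to second order in $k^2$ about $k=0$, using the ODE $(\mathrm{sn}')^2=(1-\mathrm{sn}^2)(1-k^2\mathrm{sn}^2)$ and its counterparts to compute $\partial_{k^2}^{\,i}\mathrm{sn}^2|_{k=0}$ ($i=1,2$) as closed-form polynomials in $\sin z,\cos z,z$. Substituting into the four defining series, extracting the coefficient of $z^{2(j-1)}$, and rewriting the result in terms of $a_{j-1},b_{j-1},\kappa_{j-1},\hat{\kappa}_{j-1}$ yields the formulae for $\alpha_{j,1},\gamma_{j,1},\alpha_{j,2},\gamma_{j,2}$. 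The companion values $\beta_{j,i},\delta_{j,i}$ then follow either from the same calculation or from the additive identities $\alpha_{j,1}-\beta_{j,1}=2\kappa_{j-1}$ and $\delta_{j,1}-\gamma_{j,1}=2\kappa_{j-1}$, which are just the $k^2$-coefficients of $d_{j-1}$ and $f_{j-1}$ in isolation.

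The leading coefficient at $k^{2j}$ is handled by a degree-in-$k^2$ argument on the Stein recurrences: only a single chain of top-degree terms propagates inductively, yielding $\alpha_{j,j}=2^{2j}a_{j-1}$ and $\beta_{j,j}=(2-2^{2j})a_{j-1}$, while the cancellation $\gamma_{j,j}=\delta_{j,j}=0$ reflects the structural fact that the $k^{2j}$-coefficients of both $e_{j-1}$ and $f_{j-1}$ themselves vanish. The main technical obstacle is the $k^4$-computation, where the second-order Taylor expansion must be combined with the $(1-k^2)$ prefactor so that $\hat{\kappa}_{j-1}=j(4j-7)/32$ emerges correctly; this step requires careful bookkeeping but is essentially algebraic once the $\partial_{k^2}^{\,2}\mathrm{sn}^2|_{k=0}$ expansion is in hand, and the full verification is carried out in \cite[Lemma~3.6]{Stein}.
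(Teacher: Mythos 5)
The paper gives no proof of this lemma at all: it is imported verbatim from \cite[Lemma~3.6]{Stein}, with the sentence preceding the statement serving as the only justification. Your sketch (evaluating at $k=0$ for the constant terms, Taylor-expanding in $k^2$ for the quadratic and quartic coefficients, and a top-degree induction on the recurrences for the $k^{2j}$-coefficients) is a plausible reconstruction of that verification and is consistent with the tabulated examples, and since you ultimately defer the full computation to the same citation the paper relies on, there is nothing in the paper to compare against and no gap relative to it.
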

\begin{lem}
\label{lem: subspace of linear dependencies of auxiliary polynomials} For
$j\geq1$, the subspace
\[
S_{j}:= 
\{ 
	\underline{t}:=\left(t_{1},\ldots,t_{4}\right)\in\mathbb{Q}^{4}:\,
	\langle 
		\underline{t},
		(-\Theta_{j}^{-},\Theta_{j}^{+},-\Lambda_{j}^{-}\Lambda_{j}^{+})
	\rangle =0
\} 
\]
is spanned by $\underline{v}_{j}:=\left(1-2^{2j+1},2^{2j+1},1,0\right)^{T}$.
In particular,\textcolor{black}{{} }for each even $j\geq2$\textcolor{black}{{}
the three quantities }$x_{4j-3}$, $x_{4j-2},x_{4j-1}$, and $x_{4j-3},\,x_{4j-2},\,x_{4j}$
for odd $j\geq 3$, respectively, are linearly dependent over $\mathbb{Q}$.\textcolor{red}{}
\end{lem}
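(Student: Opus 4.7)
The plan is to split the lemma into two independent claims: (A) that $\underline{v}_{j}$ lies in $S_{j}$, i.e.\ the polynomial identity
\[
\Lambda_{j}^{-}(k)\;=\;(2^{2j+1}-1)\,\Theta_{j}^{-}(k)\,+\,2^{2j+1}\,\Theta_{j}^{+}(k)
\]
holds in $\mathbb{Q}[k^{2}]$; and (B) that $\dim_{\mathbb{Q}}S_{j}\le 1$. Once both are in hand, the first statement of the lemma follows at once. The ``In particular'' assertion is then immediate: for even $j\ge 2$ the identity above, applied with $j-1$ in place of $j$, writes $\Lambda_{j-1}^{-}$ as a $\mathbb{Q}$-linear combination of $\Theta_{j-1}^{-}$ and $\Theta_{j-1}^{+}$, so after multiplying by $\bigl(\tfrac{2K}{\pi}\bigr)^{2j}$ the triple $x_{4j-3},x_{4j-2},x_{4j-1}$ is $\mathbb{Q}$-linearly dependent; the case of odd $j\ge 3$ is analogous, with $x_{4j}$ replacing $x_{4j-1}$.

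For (A), I would pass to generating functions. Substituting (\ref{eq: definition of ThetaMinus})--(\ref{eq: definition of LambdaMinus}) and rearranging, the target identity is equivalent to $(2^{2j+2}-1)\,c_{j}(k)+d_{j}(k)=e_{j}(k)-f_{j}(k)$. Multiplying by $z^{2j}$, summing over $j\ge 1$, inserting the four power-series expansions recorded in Section \ref{Sec2}, and using $\sum_{j\ge 1}2^{2j+2}c_{j}(k)z^{2j}=4\sum_{j\ge 1}c_{j}(k)(2z)^{2j}$, the statement collapses to a single closed identity among the Jacobi elliptic functions evaluated at $z$ and $2z$ (roughly of the shape $4\,\mathrm{ns}^{2}(2z)=\mathrm{ns}^{2}(z)+(1-k^{2})\mathrm{nc}^{2}(z)-(1-k^{2})\mathrm{nd}^{2}(z)-\mathrm{dn}^{2}(z)+k^{2}+2$). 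I expect this step to be the main obstacle, and it is essentially the content of the forthcoming elliptic-function lemma alluded to in the footnote; as the footnote hints, after clearing denominators this identity should reduce to the doubling formula $\mathrm{sn}(2z)=2\,\mathrm{sn}(z)\,\mathrm{cn}(z)\,\mathrm{dn}(z)/\bigl(1-k^{2}\mathrm{sn}^{4}(z)\bigr)$ combined with the Pythagorean-type relations $\mathrm{sn}^{2}+\mathrm{cn}^{2}=1$ and $\mathrm{dn}^{2}+k^{2}\mathrm{sn}^{2}=1$.

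For (B), I would extract three necessary linear conditions on $\underline{t}=(t_{1},\ldots,t_{4})\in S_{j}$ by reading off the coefficients of $k^{0}$, $k^{2}$, and $k^{2(j+1)}$ in $-t_{1}\Theta_{j}^{-}+t_{2}\Theta_{j}^{+}-t_{3}\Lambda_{j}^{-}+t_{4}\Lambda_{j}^{+}$. Invoking Lemma \ref{lem: explicit expressions for first coefficients of auxilliary poly}: the top-degree coefficient (using $\gamma_{j+1,j+1}=\delta_{j+1,j+1}=0$ and $\alpha_{j+1,j+1}=2^{2j+2}a_{j}$, $\beta_{j+1,j+1}=(2-2^{2j+2})a_{j}$) forces $2^{2j+1}t_{1}+(2^{2j+1}-1)t_{2}=0$; the constant coefficient gives $a_{j}(t_{1}-t_{2})+b_{j}(t_{3}-t_{4})=0$; and the $k^{2}$ coefficient, after substituting the constant-term relation, collapses to $t_{1}+t_{2}-t_{3}-t_{4}=0$. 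A direct computation of the $3\times 3$ minor on columns $1,2,4$ of the resulting constraint matrix yields $a_{j}(2^{2j+2}-1)+b_{j}=2b_{j}\ne 0$, where the final equality uses the arithmetic relation $b_{j}=(2^{2j+2}-1)\,a_{j}$, an immediate consequence of the Bernoulli-number formulae for $a_{j}$ and $b_{j}$. Hence the three conditions have rank $3$, and so $\dim_{\mathbb{Q}}S_{j}\le 1$. It is worth noting that the very same relation $b_{j}=(2^{2j+2}-1)a_{j}$ is what makes $\underline{v}_{j}$ compatible with the constant-term constraint, providing a neat consistency check for (A). Combining (A) and (B) gives $S_{j}=\mathbb{Q}\,\underline{v}_{j}$, finishing the proof.
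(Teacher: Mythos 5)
Your proposal is correct and follows the same two-part strategy as the paper's own proof: membership of $\underline{v}_{j}$ in $S_{j}$ is reduced, via the generating-function manipulation, to the relation $(2^{2j+2}-1)c_{j}+d_{j}-e_{j}+f_{j}=0$ and hence to the elliptic-function identity of Lemma~\ref{Lem: Elliptic function identity} (your displayed identity agrees with that lemma exactly), while the bound $\dim_{\mathbb{Q}}S_{j}\leq 1$ is obtained by extracting linear conditions on $\underline{t}$ from the coefficients supplied by Lemma~\ref{lem: explicit expressions for first coefficients of auxilliary poly}. The one genuine difference lies in the second part: the paper assembles a $4\times 4$ matrix $\Xi$ from the coefficients of $k^{0},k^{2},k^{4},k^{2(j+1)}$ and verifies its row echelon form by Mathematica, whereas you use only the three conditions coming from $k^{0}$, $k^{2}$ and $k^{2(j+1)}$ and check by hand that the minor on columns $1,2,4$ equals $a_{j}(2^{2j+2}-1)+b_{j}=2b_{j}\neq 0$, via the relation $b_{j}=(2^{2j+2}-1)a_{j}$ from the Bernoulli-number formulae. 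This is a modest but real gain: it bypasses the quartic coefficients $\alpha_{j,2},\ldots,\delta_{j,2}$ and the computer-algebra step entirely, and the same relation $b_{j}=(2^{2j+2}-1)a_{j}$ doubles as the consistency check that $\underline{v}_{j}$ satisfies the constant-term constraint. Your handling of the ``In particular'' clause (substituting $j-1$ for $j$ and multiplying by $(2K/\pi)^{2j}$, with the parity of $j$ deciding whether $x_{4j-1}$ or $x_{4j}$ joins the dependent triple) is also correct.
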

\begin{proof}
The argument splits into two parts. Firstly, we show that any element
of $S_{j}$ is necessarily a rational multiple of $\underline{v}_{j}$,
and then that, indeed, any such multiple is an element of $S_{j}$.

(i) If $\underline{t}\in S_{j}$, then $\underline{t}$ is in the
kernel of the matrix
\[
\varXi:=\begin{pmatrix}-\alpha_{j+1,0} & \beta_{j+1,0} & -\gamma_{j+1,0} & \delta_{j+1,0}\\
-\alpha_{j+1,1} & \beta_{j+1,1} & -\gamma_{j+1,1} & \delta_{j+1,1}\\
-\alpha_{j+1,2} & \beta_{j+1,2} & -\gamma_{j+1,2} & \delta_{j+1,2}\\
-\alpha_{j+1,j+1} & \beta_{j+1,j+1} & -\gamma_{j+1,j+1} & \delta_{j+1,j+1}
\end{pmatrix}.
\]
Using the explicit expressions provided 
by Lemma \ref{lem: explicit expressions for first coefficients of auxilliary poly}
for the matrix entries, a calculation (performed by Mathematica) yields
that $\Xi$ can be transformed to the following row echelon form 
\[
\begin{pmatrix}1 & 0 & 2^{2j+1}-1 & 0\\
0 & 1 & -2^{2j+1} & 0\\
0 & 0 & 0 & 1\\
0 & 0 & 0 & 0
\end{pmatrix}.
\]
Therefore, the kernel of $\Xi$ is given by 
$\mathbb{Q}(1-2^{2j+1},\,2^{2j+1},\,1,\,0)$.
\\
\\
(ii) Let $\underline{t}:= t(1-2^{2j+1},\,2^{2j+1},\,1,\,0)$
for $t\in\mathbb{Q}$. Now we proceed to show that the Euclidean scalar product 
$\langle \underline{t},(\Theta_{j}^{-},\Theta_{j}^{+},
\varLambda_{j}^{-},\varLambda_{j}^{+})\rangle $
is indeed the zero polynomial in $k$. This is equivalent to demonstrating
that 
\begin{align}
0 & =(2^{2j+1}-1)\Theta_{j}^{-}+2^{2j+1}\Theta_{j}^{+}-\varLambda_{j}^{-}=(2^{2j+2}-1)c_{j}+d_{j}-e_{j}+f_{j} \label{eq: relation between c_j, d_j, e_j, f_j}
\end{align}
holds for any $j\geq1$. Multiplying the last equation by $z^{2j}$
and summing over $j\geq1$, this assertion is seen to be equivalent
to the identity
\begin{align*}
\left(1-k^{2}\right)\left(nc^{2}\left(z\right)-1\right) & 
=4\left[\mathrm{ns}^{2}\left(2z\right)-\frac{1}{(2z)^{2}}-\frac{1}{3}(1+k^{2})\right]\\
 & -\left[\mathrm{ns}^{2}\left(z\right)-\frac{1}{z^{2}}-\frac{1}{3}(1+k^{2})\right]\\
 & +\left(1-k^{2}\right)\left(\mathrm{nd}^{2}\left(z\right)-1\right)
 +\mathrm{dn}^{2}\left(z\right)-1.
\end{align*}
Simplifying the equation above, we conclude that it suffices to prove
\begin{equation}
\left(1-k^{2}\right)\mathrm{nc}^{2}\left(z\right)=4\mathrm{ns}^{2}\left(2z\right)-\mathrm{ns}^{2}\left(z\right)-\left(1+k^{2}\right)
+\left(1-k^{2}\right)\mathrm{nd}^{2}\left(z\right)-\mathrm{dn}^{2}\left(z\right)-1\label{eq: relation of nc^2, ns^2, sn^2, nd^2, 1, k^2}
\end{equation}
for demonstrating (\ref{eq: relation between c_j, d_j, e_j, f_j}). However,
this identity holds by the subsequent Lemma\,\ref{Lem: Elliptic function identity}.
\end{proof}

\begin{lem} \label{Lem: Elliptic function identity}
Let $k^2\in \C\setminus [1,\infty)$ and $z\in {\C}\setminus \{ mK\,:\, m\in {\Z} \}$. Then we have the identity
\[4\,\mathrm{ns}^2(2z) \,=\, (1-k^2)\big( \mathrm{nc}^2(z) - \mathrm{nd}^2(z) \big) 
+ \big( \mathrm{ns}^2(z) - \mathrm{dn}^2(z) \big) + (2+k^2) \,.\]
\end{lem}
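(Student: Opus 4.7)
The plan is to reduce the identity to a polynomial identity in one variable via the substitution $u=\mathrm{sn}^2(z)$. Recall the classical doubling formula
\[\mathrm{sn}(2z)\,=\,\frac{2\,\mathrm{sn}(z)\,\mathrm{cn}(z)\,\mathrm{dn}(z)}{1-k^2\mathrm{sn}^4(z)}\,,\]
which immediately gives
\[4\,\mathrm{ns}^2(2z)\,=\,\frac{\big(1-k^2\mathrm{sn}^4(z)\big)^2}{\mathrm{sn}^2(z)\,\mathrm{cn}^2(z)\,\mathrm{dn}^2(z)}\,.\]
Using the standard identities $\mathrm{cn}^2(z)=1-\mathrm{sn}^2(z)$ and $\mathrm{dn}^2(z)=1-k^2\mathrm{sn}^2(z)$, substitution of $u=\mathrm{sn}^2(z)$ turns the left-hand side into the rational function $(1-k^2u^2)^2/\bigl(u(1-u)(1-k^2u)\bigr)$.

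Next I would rewrite the right-hand side in the same variable. Using $\mathrm{nc}^2(z)=1/(1-u)$, $\mathrm{nd}^2(z)=1/(1-k^2u)$, $\mathrm{ns}^2(z)=1/u$, and $\mathrm{dn}^2(z)=1-k^2u$, the bracket $(1-k^2)(\mathrm{nc}^2-\mathrm{nd}^2)$ collapses, after combining the two fractions, to $u(1-k^2)^2/\bigl((1-u)(1-k^2u)\bigr)$, and the bracket $\mathrm{ns}^2-\mathrm{dn}^2$ becomes $(1-u+k^2u^2)/u$. Hence the whole right-hand side is also a rational function in $u$ with denominator $u(1-u)(1-k^2u)$.

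The remaining task is then to verify the polynomial identity
\[u^2(1-k^2)^2+(1-u+k^2u^2)(1-u)(1-k^2u)+(2+k^2)u(1-u)(1-k^2u)\,=\,(1-k^2u^2)^2\]
in $\Q[u,k^2]$. This is a direct expansion: each side is of degree $4$ in $u$, and comparing coefficients of $u^0,u^1,\dots,u^4$ gives four elementary identities in $k^2$ that are straightforward to check (with the cross cancellations making all coefficients except those of $u^0,u^2,u^4$ vanish). Once the polynomial identity is verified on the Zariski-dense set where $u\neq0,1,1/k^2$, the original identity between meromorphic functions of $z$ follows by analytic continuation on the stated domain $z\in\C\setminus\{mK:m\in\Z\}$.

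The main obstacle is purely bookkeeping: correctly translating the four elliptic functions into rational functions in $u$ without sign errors, and matching coefficients. There is no conceptual difficulty beyond recognising that the doubling formula for $\mathrm{sn}$ provides the required algebraic identity after the single substitution $u=\mathrm{sn}^2(z)$.
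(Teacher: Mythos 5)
Your proof is correct and follows essentially the same route as the paper: both rest on the doubling formula $\mathrm{sn}(2z)=2\,\mathrm{sn}(z)\,\mathrm{cn}(z)\,\mathrm{dn}(z)/(1-k^2\mathrm{sn}^4(z))$ (cited in the paper as formula (124.01) of Byrd--Friedman) followed by an algebraic verification, and the polynomial identity you reduce to does check out, the left-hand side expanding to $1-2k^2u^2+k^4u^4=(1-k^2u^2)^2$. Your bookkeeping via the single substitution $u=\mathrm{sn}^2(z)$ is in fact slightly cleaner than the paper's, which starts from an ad hoc two-variable algebraic identity, divides by $k^4$, and therefore has to treat $k=0$ as a separate case, whereas your argument needs no such case split.
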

\begin{proof} 
We distinguish two cases according whether $k$ vanishes or not. \\
{\em Case\,1.\/} $k\not= 0$. \\
We start with an algebraic identity which holds for all complex numbers $\alpha$ and $\b$,
\[{\big( \alpha - {(1-\beta)}^2 \big)}^2 \,=\, \alpha^2\beta^2 + (1-\beta)(\alpha -1 +\beta)\big( \alpha - 1 - \beta^2 +(2+\alpha)\beta \big) \,.\]
For a simpler notation we omit throughout the proof of Lemma\,\ref{Lem: Elliptic function identity} the parentheses around the argument $z$ from each 
elliptic function. 
Replacing $\alpha$ by $k^2$ and $\beta$ by $\dno$, we obtain
\begin{eqnarray}
&& {\big( k^2 - {(1-\dno)}^2 \big)}^2 \nonumber \\
&=& k^4\ddno + (1-\dno)(k^2-1+\dno)\big( (k^2-1)- \ddno + (2+k^2)\dno \big) \,.
\label{20}
\end{eqnarray}
From formula (121.00) in \cite[p.\,20]{Byrd} we recall the basic identities 
\begin{eqnarray}
\sno + \cno &=& 1 \,,
\label{H1} \\
k^2\sno + \dno &=& 1 \,.
\label{H2}
\end{eqnarray}
They imply the formulae
\begin{equation}
{(1-\dno)}^2 \,=\, k^4\ssno 
\label{30}
\end{equation}
and
\begin{equation}
(1-\dno)(k^2-1+\dno) \,=\, k^4\sno\, (1-\sno) \,=\, k^4\sno\, \cno \,.
\label{40}
\end{equation}
Substituting (\ref{30}) and (\ref{40}) into (\ref{20}) and dividing the resulting identity by $k^4$, it turns out that
\begin{eqnarray*}
&& {\big( 1 - k^2\ssno \big)}^2 \\
&=& \ddno + (k^2-1)\sno\, \cno - \sno\, \cno\, \ddno + (2+k^2)\sno\, \cno\, \dno \,.
\end{eqnarray*}
Using again (\ref{H1}) and (\ref{H2}), we continue our calculations by
\begin{eqnarray*}
{\big( 1 - k^2\ssno \big)}^2 &=& \big( \sno + \cno \big) \dno + (k^2-1)\sno\, \cno - \sno\, \cno\, \ddno \\
&& -\, k^2\sno\, \dno + (2+k^2)\sno\, \cno\, \dno \\
&=& (1-k^2)\sno \big( \dno - \cno \big) + \cno\, \dno\, \big( 1 - \sno\, \dno\, \big) \\
&& +\,(2+k^2)\sno\, \cno\, \dno \,.
\end{eqnarray*}
Since $z$ is no integer multiple of $K$, 
the number $\sno\, \cno\, \dno$ does not vanish. Thus, we obtain the identity
\[\frac{{\big( 1 - k^2\ssno \big)}^2}{\sno\, \cno\, \dno} \,
=\, (1-k^2)\Big( \,\frac{1}{\cno} - \frac{1}{\dno}\,\Big) 
+ \Big( \,\frac{1}{\sno} - \dno\,\Big) + (2+k^2) \,.\]
The left-hand side equals to $4\nss$, which follows from formula 
(124.01) in \cite[p.\,24]{Byrd}. Hence, the identity of the lemma is proven for $k\not= 0$. \\
{\em Case\,2.\/} $k=0$. \\
We have $\mbox{sn\,}z=\sin z$, $\mbox{cn\,}z=\cos z$ 
and $\mbox{dn\,}z \equiv 1$ (cf. formula (122.08) in \cite[p.\,21]{Byrd}). 
Therefore, in the case $k=0$, 
the identity in question reduces to the trigonometric formula
\[\frac{4}{\sin^2 (2z)} \,=\, \frac{1}{\cos^2 z} +  \frac{1}{\sin^2 z} \,.\]
Obviously this is a consequence of $\sin (2z)=2\sin z\, \cos z$, 
provided that $z$ is no integer multiple of $K=K(0)=\pi/2$.
\end{proof}

\section{Proof of Theorem \ref{Thm1}} \label{Sec6}
\begin{proof}
We introduce the $4\times 4$ matrices
\[{\bf R}^{(2s)} \,:=\, \left( \begin{array}{cccc} 
R^{(\Phi_{2s})} & R^{(\Phi_{2s}^*)} & R^{(\Psi_{2s})} & R^{(\Psi_{2s}^*)} \\
0 & 0 & 0 & -3{\hat w}_s \\
0 & {\hat w}_s & -2{\hat w}_s & 0 \\
{\hat w}_s & 0 & {\hat w}_s & 0
\end{array} \right) \,,\]
if $s$ is even, and
\[{\bf R}^{(2s)} \,:=\, \left( \begin{array}{cccc} 
R^{(\Phi_{2s})} & R^{(\Phi_{2s}^*)} & R^{(\Psi_{2s})} & R^{(\Psi_{2s}^*)} \\
0 & 0 & 3{\hat w}_s & 0 \\
{\hat w}_s & 0 & 0 & 2{\hat w}_s \\
0 & {\hat w}_s & 0 & -{\hat w}_s
\end{array} \right) \]
for odd $s$. Moreover, for $j=1,\dots,m-1$, we set
\[{\bf P}_j^{(s)} \,:=\quad \left\{ \begin{array}{ll}
\big( -\Theta_j^-,\,\Theta_j^+,\,-\Lambda_j^-,\,\Lambda_j^+ \big) 
& \quad \mbox{for $s$ even}\,, \\ \\
\big( -\Theta_j^+,\,\Theta_j^-,\,\Lambda_j^+,\,-\Lambda_j^- \big) 
& \quad \mbox{for $s$ odd}\,.
\end{array} \right. \]
The subsequent $(m+3)\times (4m)$ - matrix of central importance in our proof is
\[A_0^{(m)} \,:=\, \left( \begin{array}{cccccc}
{\bf R}^{(2)} & {\bf R}^{(4)} & {\bf R}^{(6)} & {\bf R}^{(8)} & \dots & {\bf R}^{(2m)} \\ \\
& w_1^{(2)}{\bf P}_1^{(2)} & w_1^{(3)}{\bf P}_1^{(3)} & w_1^{(4)}{\bf P}_1^{(4)} 
& \dots & w_1^{(m)}{\bf P}_1^{(m)} \\ \\
& & w_2^{(3)}{\bf P}_2^{(3)} & w_2^{(4)}{\bf P}_2^{(4)} & \dots & w_2^{(m)}{\bf P}_2^{(m)} \\ \\
& {\bf 0} & & w_3^{(4)}{\bf P}_3^{(4)} & \dots & w_3^{(m)}{\bf P}_3^{(m)} \\ \\
& & & & \ddots & \vdots \\ \\
& & & & & w_{m-1}^{(m)}{\bf P}_{m-1}^{(m)}
\end{array} \right) \,.\]  
In the sequel, we transform $A_0^{(m)}$ step by step by elementary matrix operations. 
After performing the $i$-th step $(i\geq 1)$ on $A_{i-1}^{(m)}$, we denote the
resulting $(m+3)\times (4m)$ - matrix by $A_i^{(m)}$. \\
{\em Step\,1.\/}\,Using elementary column operations, 
we transform ${\bf R}^{(2)}$ into a row echelon form, 
\[{\hat {\bf R}}^{(2)} \,:=\, \left( \begin{array}{cccc}
0 & -1/8 & 1/24 & -1/24 \\
0 & 1/8 & 0 & 0 \\
0 & 0 & 1/24 & 0 \\
0 & 0 & 0 & 1/24
\end{array} \right) \,.\]
{\em Step\,j\/}\,(for $2\leq j\leq m$). By elementary column operations, 
Lemma\,\ref{Lem1} allows us to transform the $j$-th row of the matrix $A_1^{(m)}$ 
into the row
\[\big( \underbrace{0,\dots,0}_{4(j-1)},w_{j-1}^{(j)}
{\bf P}_{j-1}^{(j)},\underbrace{0,\dots,0}_{4(m-j)} \big) \,.\]
After the $m$-th step, we achieve the $(m+3)\times (4m)$ - matrix
\[A_m^{(m)} \,:=\, \left( \begin{array}{cccccc}
{\hat {\bf R}}^{(2)} & * & * & * & \dots & * \\ \\
& w_1^{(2)}{\bf P}_1^{(2)} & 0 & 0 & \dots & 0 \\ \\
& & w_2^{(3)}{\bf P}_2^{(3)} & 0 & \dots & 0 \\ \\
& {\bf 0} & & w_3^{(4)}{\bf P}_3^{(4)} & \dots & 0 \\ \\
& & & & \ddots & \vdots \\ \\
& & & & & w_{m-1}^{(m)}{\bf P}_{m-1}^{(m)}
\end{array} \right) \, \] 
where each asterisk in the first row of $A_m^{(m)}$ represents (possibly different) 
a rational number. 
Thanks to Lemma\,\ref{lem: subspace of linear dependencies of auxiliary polynomials},
for each $j=1,\dots,m-1$, the three entries 
$\Theta_j^+,-\Lambda_j^-,\Lambda_j^+$ in ${\bf P}_{j}^{(j+1)}$ are linearly independent over
${\Q}$, whereas the four entries $-\Theta_j^-,\Theta_j^+,-\Lambda_j^-,\Lambda_j^+$ 
are linearly dependent over ${\Q}$. Moreover, we know that 
$\mbox{rank}_{\Q}\big( {\hat {\bf R}}^{(2)} \big) =3$. This implies that
\[\mbox{rank}_{\Q}\big( A_m^{(m)} \big) \,\geq \,3m \,.\]
Due to the rank theorem we obtain
\begin{equation}
\dim_{\mathbb{Q}} \big( \,\mbox{kernel}(A_m^{(m)})\,\big) \,=\, 4m 
- \mbox{rank}_{\Q}\big( A_m^{(m)} \big) \,\leq \,m \qquad (m\geq 1)\,.
\label{kernel1}
\end{equation}
In order to prove the inverse inequality of (\ref{kernel1}) 
we apply induction with respect to $m\geq 6$. Moreover, 
we show at the same time that every vector
$\underline{v}\in \mbox{kernel}(A_0^{(m)})$ satisfies 
(\ref{eq: additional assertion in Stein Conjecture}). 
Note that in the case $m=1$ we obviously have
\[ \dim_{\mathbb{Q}} \big( \,\mbox{kernel}(A_0^{(1)})\,\big) 
\,=\, \dim_{\mathbb{Q}} \big( \,\mbox{kernel}({\bf R}^{(2)})\,\big) 
\,=\, 4 - 3 \,=\, 1\,,\]
and that (\ref{eq: additional assertion in Stein Conjecture}) holds. 
For $m=2$ and $m=3$, we refer the reader to the example in the appendix, where the truth of
(\ref{eq: additional assertion in Stein Conjecture}) 
can be deduced from the explicit solutions given by the formulae
\[(-2u+v)\Phi_2 + u\Phi_2^* + (u-v)\Psi_2^* - 7v\Phi_4 + 8v\Phi_4^* 
+ v\Psi_4 \,=\, 0 \qquad (u,v\in {\Q}) \]
and
\begin{eqnarray*}
&& (-u-w)\Phi_2 + u\Phi_2^* + w\Psi_2^* + (u-v-w)\Phi_4 
+ v\Phi_4^* + (u-w)\Psi_4 \\ \\
&+& \Big( \,\frac{128}{3}u - \frac{16}{3}v 
- \frac{128}{3}w\,\Big) \Phi_6 + \Big( \,-\frac{124}{3}u 
+ \frac{31}{6}v + \frac{124}{3}w\,\Big) \Phi_6^* \\ \\
&+& \Big( \,-\frac{4}{3}u + \frac{1}{6}v 
+ \frac{4}{3}w\,\Big) \Psi_6^* \,=\, 0 \qquad (u,v,w\in {\Q}) \,,
\end{eqnarray*}
cf. \cite[Eq.\,(5.16), (5.17)]{Stein}. The desired inequality 
$\dim_{\mathbb{Q}} \big( \,\mbox{kernel}(A_m^{(m)})\,\big) \geq m$ 
and the specific properties from
(\ref{eq: additional assertion in Stein Conjecture}) 
for $m=4,5,6$ can be verified by computer-assisted computations. \\
Now, let the assertions be true for $m\geq 6$. 
Then there exist $m$ linearly independent (column) vectors from ${\Q}^{4m+4}$, 
in which the last four entries 
vanish. We proceed to construct a vector $\underline{u}$ 
from the kernel of $A_0^{(m+1)}$, whose last four components 
do not vanish simultaneously. To this end, we find 
the components of $\underline{u}$ from the bottom up in groups of four entries. 
In the sequel, we assume that $m$ is even; for odd $m$ we apply analogous arguments. 
By Lemma\,\ref{lem: subspace of linear dependencies of auxiliary polynomials}, 
the vector \[\tau_1\underline{v}_{m} \,:=\, \tau_1 
{\big( 1-2^{2m+1},\,2^{2m+1},\,1,\,0 \big)}^T\,,\]
where $\tau_1$ is a parameter to be fixed in due time, 
can be applied to form the last four entries of $\underline{u}$. 
This is the first step of the recursive 
construction of $\underline{u}$. 
To proceed with step $i+1$ (after step $i$), 
we rewrite (\ref{eq: hauptteil}) by grouping together terms containing 
$\Theta_{m-i-1}^{\pm},\Lambda_{m-i-1}^{\pm}$ and bracketing out. 
We obtain the equation 
\[\omega^{(\Theta_{m-i-1}^+)}(\tau_1,\dots,\tau_i,\eta_1)\cdot \Theta_{m-i-1}^+ 
+ \dots + \omega^{(\Lambda_{m-i-1}^-)}(\tau_1,\dots,\tau_i,\eta_4)\cdot 
\Lambda_{m-i-1}^- \,=\, 0\,.\]
The quantities 
\begin{eqnarray*}
&& \omega^{(\Theta_{m-i-1}^+)}(\tau_1,\dots,\tau_i,\eta_1) \,, \\
&& \omega^{(\Theta_{m-i-1}^-)}(\tau_1,\dots,\tau_i,\eta_2) \,, \\
&& \omega^{(\Lambda_{m-i-1}^+)}(\tau_1,\dots,\tau_i,\eta_3) \,, \\
&& \omega^{(\Lambda_{m-i-1}^-)}(\tau_1,\dots,\tau_i,\eta_4)
\end{eqnarray*}
are linear forms in $\tau_{1},\ldots,\tau_{i}$ with rational coefficients. 
Next, given a parameter $\tau_{i+1}\in {\Q}$, we can find numbers $\eta_1,\dots,\eta_4$ 
such that for every $i=1,\dots,m-1$ the vector identity
\[{\big( \omega^{(\Theta_{m-i-1}^+)}(\tau_1,\dots,\tau_i,\eta_1),\dots 
,\omega^{(\Lambda_{m-i-1}^-)}(\tau_1,\dots,\tau_i,\eta_4) \big)}^T 
= \tau_{i+1} {\big( 2^{2(m-i-1)+1},\,2^{2(m-i-1)+1}-1,0,1 \big)}^T \]
holds. Now we fix the parameters $\tau_1,\dots,\tau_m$. 
Let $\underline{u}=(u_1,\dots,u_{4m+4})$. If the equation
\begin{equation}
\sum_{i=1}^m {\bf R}^{(2i)}\cdot {\big( \,u_{4i+1},\dots,u_{4i+4}\,\big)}^T 
\,=\, \underline{0}
\label{EQSummeSkalarprodukt}
\end{equation}  
holds, then we let $(u_1,\dots,u_4)\in \mbox{kernel}\big( {\bf R}^{(2)} \big)$. 
Otherwise, if (\ref{EQSummeSkalarprodukt}) does not hold, we can choose 
$\tau_1,\dots,\tau_m$ in such a way that every non-trivial linear form 
with rational coefficients in the components on the left-hand side of 
(\ref{EQSummeSkalarprodukt}) vanishes. In order to achieve $\tau_1 \not= 0$, 
we exploit the condition $m\geq 6$, since we need two parameters to cancel
the first component, and one parameter for cancelling each 
of the three remaining components. This completes the proof by induction and shows that
\[\dim_{\mathbb{Q}} \big( \,\mbox{kernel}(A_m^{(m)})\,\big) \,\geq \,m \,.\]
Together with (\ref{kernel1}) we have proven that 
$\dim_{\mathbb{Q}} \big( \,\mbox{kernel}(A_m^{(m)})\,\big) =m$ 
for $m\geq 1$. Additionally, we know that 
(\ref{eq: additional assertion in Stein Conjecture}) holds 
by the construction of $\underline{u}$.
\end{proof}

\newpage

\section{Appendix: The matrix $A_0^{(3)}$} \label{Sec7}
Firstly, we compute step by step all the expressions of the 
12 functions $\Phi_{2j},\Phi_{2j}^*,\Psi_{2j},\Psi_{2j}^*$ 
for $j=1,2,3$ using the formulae (\ref{Phi.even} - \ref{Psi*.odd}). 
Secondly, we decompose all these expressions into the polynomials 
$\mathbf{P}_{I}^{\left(\xi\right)}$ and $\mathbf{P}_{II}^{\left(\xi\right)}$, 
such that the elements of the matrix $A_0^{(3)}$ are obtained.
\begin{center}
\begin{tabular}{l||cc|c|c} 
Tab.\,1 & $a_j$ & $b_j$ & $c_j(k)$ & $d_j(k)$ \\ \hline & & & \\
$j=0$ & $\displaystyle \frac{1}{3}$ & 1 & -- & -- \\ & & & \\
$j=1$ & $\displaystyle \frac{1}{15}$ & 1 
& $\displaystyle \frac{1}{15}(1-k^2+k^4)$ & $k^2(1-k^2)$ \\ & & & \\
$j=2$ & $\displaystyle \frac{2}{189}$ & $\displaystyle \frac{2}{3}$ 
& $\displaystyle \frac{1}{189}(1+k^2)(1-2k^2)(2-k^2)$ 
& $\displaystyle -\frac{1}{3}k^2(1-k^2)(1-2k^2)$ 
\end{tabular}
\end{center}
\[\]
\begin{center}
\begin{tabular}{l||c|c} 
Tab.\,2 & $\Theta_j^-(k)$ & $\Theta_j^+(k)$ \\ \hline & \\
$j=1$ & $\displaystyle \frac{1}{15}\big( 16k^4 - 16k^2 + 1 \big)$ 
& $\displaystyle \frac{1}{15}\big( 1 + 14k^2 - 14k^4 \big)$ \\ & \\
$j=2$ & $\displaystyle \frac{2}{189}(2k^2-1)(32k^4-32k^2-1)$ 
& $\displaystyle -\frac{2}{189}(2k^2-1)(31k^4-31k^2+1)$ 
\end{tabular}
\end{center}
\[\]
\begin{center}
\begin{tabular}{l||c|c|c|c} 
Tab.\,3 & $e_j(k)$ & $f_j(k)$ & $\Lambda_j^-(k)$ & $\Lambda_j^+(k)$ \\ \hline & & & \\
$j=1$ & $1-k^2$ & $-k^2$ & 1 & $1-2k^2$ \\ & & & \\
$j=2$ & $\displaystyle \frac{1}{3}(1-k^2)(2-k^2)$ 
& $\displaystyle \frac{1}{3}k^2(1+k^2)$ & $\displaystyle \frac{2}{3}(1-k^2)$ 
& $\displaystyle \frac{2}{3}(k^4-k^2+1)$
\end{tabular}
\end{center}
\[\]
\begin{center}
\begin{tabular}{l||c||c|c||c|c} 
& & $w_j^{(s)}$ & $w_j^{(s)}$ & $\sigma_{s-j-1}(s)$ & $\sigma_{s-j-1}(s)$ \\ 
Tab.\,4 & ${\hat w}_s$ & $j=1$ & $j=2$ & $j=1$ & $j=2$ \\ \hline & & & & \\
$s=1$ & $\displaystyle \frac{1}{24}$ & -- & -- & -- & -- \\ & & & & \\
$s=2$ & $\displaystyle \frac{1}{144}$ & $\displaystyle -\frac{1}{96}$ & -- & 1 & -- \\ & & & & \\
$s=3$ & $\displaystyle \frac{1}{720}$ & $\displaystyle \frac{1}{384}$ & $\displaystyle 
\frac{1}{640}$ & -5 & 1 
\end{tabular}
\end{center}
\[\]
The expressions for $x_1,\dots,x_{12}$ now follow 
from the formulae at the beginning of Section\,\ref{Sec4}, and from Tables\,2 and 3. 
Hence, we obtain with (\ref{Phi.even} -- \ref{Psi*.odd}) 
the following formulae.
\begin{eqnarray*}
\Phi_2 &=& \mathbf{P}_{I}^{(\Phi_2)} + \mathbf{P}_{II}^{(\Phi_2)} 
\,=\, \frac{1}{24}\left( \,1 - {\Big( \,\frac{2K}{\pi}\,\Big)}^2(1-2k^2)\,\right) \\
&=& \frac{1}{24}(x_1 + x_3) \,,\\
\Phi_2^* &=& \mathbf{P}_{I}^{(\Phi_2^*)} + \mathbf{P}_{II}^{(\Phi_2^*)} 
\,=\, \frac{1}{24}\left( \,{\Big( \,\frac{2K}{\pi}\,\Big)}^2\Big( \,\frac{6E}{K}-5+4k^2\,\Big) 
-1\,\right) \\
&=& \frac{1}{24}(x_4 - x_1) \,,\\
\Psi_2 &=& \mathbf{P}_{I}^{(\Psi_2)} + \mathbf{P}_{II}^{(\Psi_2)} 
\,=\, \frac{1}{8}\left( \,{\Big( \,\frac{2K}{\pi}\,\Big)}^2 - 1 \,\right) \\
&=& \frac{1}{8}(x_2 - x_1) \,,\\
\Psi_2^* &=& \mathbf{P}_{I}^{(\Psi_2^*)} + \mathbf{P}_{II}^{(\Psi_2^*)} 
\,=\, \frac{1}{8}\left( \,{\Big( \,\frac{2K}{\pi}\,\Big)}^2\Big( \,1 - \frac{2E}{K}\,\Big)
+ 1 \,\right) \\
&=& \frac{1}{24}(3x_1 + 2x_3 - x_4) \,,\\
\Phi_4 &=& \mathbf{P}_{I}^{(\Phi_4)} + \mathbf{P}_{II}^{(\Phi_4)} 
\,=\, \frac{1}{144}\left( \,{\Big( \,\frac{2K}{\pi}\,\Big)}^2\Big( \,\frac{6E}{K}-5+4k^2\,\Big) 
- 1 \,\right) \\
&& + \frac{1}{1440} \left( \,{\Big( \,\frac{2K}{\pi}\,\Big)}^4(16k^4 - 16k^2 +1) - 1\,\right) \\
&=& \frac{1}{144} (x_4 - x_1) + \frac{1}{96} \Big( \,x_5 - \frac{x_1}{15}\,\Big) 
\,=\, -\frac{11}{1440}x_1 + \frac{1}{144}x_4 + \frac{1}{96}x_5\,,\\
\Phi_4^* &=& \mathbf{P}_{I}^{(\Phi_4^*)} + \mathbf{P}_{II}^{(\Phi_4^*)} 
\,=\, \frac{1}{144}\left( \,1 - {\Big( \,\frac{2K}{\pi}\,\Big)}^2(1-2k^2)\,\right) \\
&& + \frac{1}{1440}\left( \,1 - {\Big( \,\frac{2K}{\pi}\,\Big)}^4(1 + 14k^2 - 14k^4)\,\right) \\
&=& \frac{1}{144} (x_1 + x_3) + \frac{1}{96} \Big( \,\frac{x_1}{15} - x_6\,\Big) 
\,=\, \frac{11}{1440}x_1 + \frac{1}{144}x_3 - \frac{1}{96}x_6 \,,\\
\Psi_4 &=& \mathbf{P}_{I}^{(\Psi_4)} + \mathbf{P}_{II}^{(\Psi_4)} 
\,=\, \frac{1}{48}\left( \,{\Big( \,\frac{2K}{\pi}\,\Big)}^2\Big( \,\frac{2E}{K} - 1\,\Big) 
- 1\,\right) \\
&& + \frac{1}{96} \left( \,{\Big( \,\frac{2K}{\pi}\,\Big)}^4 - 1\,\right) \\
&=& \frac{1}{144} (x_4 - 2x_3 - 3x_1) + \frac{1}{96} (x_7 - x_1) 
\,=\, -\frac{1}{32}x_1 - \frac{1}{72}x_3 + \frac{1}{144}x_4 + \frac{1}{96}x_7 \,,\\
\Psi_4^* &=& \mathbf{P}_{I}^{(\Psi_4^*)} + \mathbf{P}_{II}^{(\Psi_4^*)} 
\,=\, \frac{1}{48}\left( \,1 - {\Big( \,\frac{2K}{\pi}\,\Big)}^2\,\right) \\
&& + \frac{1}{96} \left( \,1 - {\Big( \,\frac{2K}{\pi}\,\Big)}^4 (1 - 2k^2)\,\right) \\
&=& \frac{1}{48} (x_1 - x_2) + \frac{1}{96} (x_1 - x_8) 
\,=\, \frac{1}{32}x_1 - \frac{1}{48}x_2 - \frac{1}{96}x_8 \,,\\
\end{eqnarray*}
\[\]
\[\]
\begin{eqnarray*}
\Phi_6 &=& \mathbf{P}_{I}^{(\Phi_6)} + \mathbf{P}_{II}^{(\Phi_6)} \\
&=& \frac{1}{720}\left( \,1 - {\Big( \,\frac{2K}{\pi}\,\Big)}^2(1-2k^2)\,\right) 
+ \frac{1}{5760} \left( \,1 - {\Big( \,\frac{2K}{\pi}\,\Big)}^4 (1 + 14k^2 - 14k^4)\,\right) \\
&& + \frac{1}{640}\left( \,\frac{2}{189} + 
{\Big( \,\frac{2K}{\pi}\,\Big)}^6\frac{2}{189}(2k^2-1)(31k^4-31k^2+1) \,\right) \\
&=& \frac{1}{720} (x_1 + x_3) + \frac{1}{384}\Big( \,\frac{1}{15}x_1 - x_6\,\Big) 
+ \frac{1}{640} \Big( \,\frac{2}{189}x_1 - x_9\,\Big) \\
&=& \frac{191}{120960}x_1 + \frac{1}{720}x_3 - \frac{1}{384}x_6 - \frac{1}{640}x_9\,,\\
\Phi_6^* &=& \mathbf{P}_{I}^{(\Phi_6^*)} + \mathbf{P}_{II}^{(\Phi_6^*)} \\
&=& \frac{1}{720}\left( \,
{\Big( \,\frac{2K}{\pi}\,\Big)}^2\Big( \,\frac{6E}{K}-5+4k^2\,\Big) 
- 1 \,\right) - \frac{1}{5760} \left( \,1 - 
{\Big( \,\frac{2K}{\pi}\,\Big)}^4 (16k^4 - 16k^2 +1)\,\right) \\
&& - \frac{1}{640}\left( \,\frac{2}{189} - 
{\Big( \,\frac{2K}{\pi}\,\Big)}^6\frac{2}{189}(2k^2-1)(32k^4-32k^2-1) \,\right) \\
&=& \frac{1}{720}(x_4 - x_1) - \frac{1}{384}\Big( \,\frac{1}{15}x_1 - x_5\,\Big) 
- \frac{1}{640} \Big( \,\frac{2}{189}x_1 - x_{10} \,\Big) \\
&=& -\frac{191}{120960}x_1 + \frac{1}{720}x_4 + \frac{1}{384}x_5 + \frac{1}{640}x_{10}\,,\\
\Psi_6 &=& \mathbf{P}_{I}^{(\Psi_6)} + \mathbf{P}_{II}^{(\Psi_6)} \\
&=& \frac{1}{240} \left( \,{\Big( \,\frac{2K}{\pi}\,\Big)}^2 - 1\,\right) 
+ \frac{1}{384} \left( \,{\Big( \,\frac{2K}{\pi}\,\Big)}^4 (1-2k^2) - 1\,\right) \\
&& + \frac{1}{960} \left( \,{\Big( \,\frac{2K}{\pi}\,\Big)}^6 (k^4-k^2+1) - 1\,\right) \\
&=& \frac{1}{240} (x_2 - x_1) + \frac{1}{384} (x_8 - x_1) + \frac{1}{640} 
\Big( \,x_{11} - \frac{2}{3}x_1\,\Big) \\ 
&=& -\frac{1}{128}x_1 + \frac{1}{240}x_2 + \frac{1}{384}x_8 + \frac{1}{640}x_{11}\,,\\ 
\Psi_6^* &=& \mathbf{P}_{I}^{(\Psi_6^*)} + \mathbf{P}_{II}^{(\Psi_6^*)} \\
&=& \frac{1}{240} \left( \,1 + {\Big( 
\,\frac{2K}{\pi}\,\Big)}^2\Big( \,1 - \frac{2E}{K}\,\Big) \,\right) 
+ \frac{1}{384} \left( \,1 - {\Big( \,\frac{2K}{\pi}\,\Big)}^4 \,\right) \\
&& + \frac{1}{960} \left( \,1 - {\Big( \,\frac{2K}{\pi}\,\Big)}^6 (1-2k^2)\,\right) \\
&=& \frac{1}{720} (3x_1 + 2x_3 - x_4) + \frac{1}{384} (x_1 - x_7) 
+ \frac{1}{640} \Big( \,\frac{2}{3}x_1 - x_{12}\,\Big) \\
&=& \frac{1}{128}x_1 + \frac{1}{360}x_3 - \frac{1}{720}x_4 
- \frac{1}{384}x_7 - \frac{1}{640}x_{12} \,.
\end{eqnarray*}
\[\]
We represent these 12 identities, which express each 
$\xi \in \Omega_2 \cup \Omega_4 \cup \Omega_6$ as linear forms 
in terms of $x_1,\dots,x_{12}$, by the matrix
\newpage
\[\left( \begin{array}{cccccccccccc}
\cfrac{x_1}{24} & \cfrac{-x_1}{24} & \cfrac{-x_1}{8} & \cfrac{x_1}{8} 
& \cfrac{-11x_1}{1440} & \cfrac{11x_1}{1440} & \cfrac{-x_1}{32} & \cfrac{x_1}{32} & 
\cfrac{191x_1}{120960} & \cfrac{-191x_1}{120960} & \cfrac{-x_1}{128} & \cfrac{x_1}{128} \\ \\
0 & 0 & \cfrac{x_2}{8} & 0 & 0 & 0 & 0 & \cfrac{-x_2}{48} & 0 & 0 & \cfrac{x_2}{240} & 0 \\ \\
\cfrac{x_3}{24} & 0 & 0 & \cfrac{x_3}{12} & 0 & \cfrac{x_3}{144} 
& \cfrac{-x_3}{72} & 0 & \cfrac{x_3}{720} & 0 & 0 & \cfrac{x_3}{360} \\ \\
0 & \cfrac{x_4}{24} & 0 & \cfrac{-x_4}{24} & \cfrac{x_4}{144} & 0 
& \cfrac{x_4}{144} & 0 & 0 & \cfrac{x_4}{720} & 0 & \cfrac{-x_4}{720} \\ \\
0 & 0 & 0 & 0 & \cfrac{x_5}{96} & \cfrac{-x_6}{96} & \cfrac{x_7}{96} 
& \cfrac{-x_8}{96} & \cfrac{-x_6}{384} & \cfrac{x_5}{384} & \cfrac{x_8}{384} & 
\cfrac{-x_7}{384} \\ \\
0 & 0 & 0 & 0 & 0 & 0 & 0 & 0 & \cfrac{-x_9}{640} & \cfrac{x_{10}}{640} 
& \cfrac{x_{11}}{640} & \cfrac{-x_{12}}{640} 
\end{array} \right) \,.\]
The 12 columns correspond to the 12 $\xi$-functions $\Phi_2,\dots,\Psi_6^*$. 
Row\,1 to row\,4 correspond to the linearly independent parameters $x_1,\dots, x_4$,
whereas row\,5 and row\,6 represent the sets of parameters $\{ x_5,x_6,x_7,x_8 \}$ 
and $\{ x_9,x_{10},x_{11},x_{12} \}$, respectively. Here, every parameter from
one set is linearly independent over ${\Q}$ from the parameters of the other set. 
This follows from the fact that the parameters $x_5,\dots,x_8$ are provided with 
the factor ${(2K/\pi)}^4$, the parameters $x_9,\dots,x_{12}$ with the factor 
${(2K/\pi)}^6$. Therefore, with regard to rank considerations, we simplify the above
matrix by replacing each $x_1,\dots,x_4$ in row\,1 to row\,4 by 1, 
and by removing the factors ${(2K/\pi)}^4$ and ${(2K/\pi)}^6$ from $x_5,\dots,x_8$ and 
$x_9,\dots,x_{10}$, respectively. Using Tab.\,2 to Tab.\,4 as well as the vectors
\begin{eqnarray*}
{\bf P}_1^{(2)} &=& \big( -\Theta_1^-,\Theta_1^+,-\Lambda_1^-,\Lambda_1^+\big) \,,\\
{\bf P}_1^{(3)} &=& \big( -\Theta_1^+,\Theta_1^-,\Lambda_1^+,-\Lambda_1^-\big) \,,\\
{\bf P}_2^{(3)} &=& \big( -\Theta_2^+,\Theta_2^-,\Lambda_2^+,-\Lambda_2^-\big) \,,\\
\end{eqnarray*}
we obtain the matrix $A_{0}^{(3)}$ given by
\[\left( \begin{array}{cccccccccccc}
\cfrac{1}{24} & \cfrac{-1}{24} & \cfrac{-1}{8} & \cfrac{1}{8} 
& \cfrac{-11}{1440} & \cfrac{11}{1440} & \cfrac{-1}{32} & \cfrac{1}{32} & 
\cfrac{191}{120960} & \cfrac{-191}{120960} & \cfrac{-1}{128} & \cfrac{1}{128} \\ \\
0 & 0 & \cfrac{1}{8} & 0 & 0 & 0 & 0 & \cfrac{-1}{48} & 0 & 0 & \cfrac{1}{240} & 0 \\ \\
\cfrac{1}{24} & 0 & 0 & \cfrac{1}{12} & 0 & \cfrac{1}{144} 
& \cfrac{-1}{72} & 0 & \cfrac{1}{720} & 0 & 0 & \cfrac{1}{360} \\ \\
0 & \cfrac{1}{24} & 0 & \cfrac{-1}{24} & \cfrac{1}{144} & 0 
& \cfrac{1}{144} & 0 & 0 & \cfrac{1}{720} & 0 & \cfrac{-1}{720} \\ \\
0 & 0 & 0 & 0 & -\cfrac{-\Theta_1^-}{96} & -\cfrac{\Theta_1^+}{96} 
& -\cfrac{-\Lambda_1^-}{96} & -\cfrac{\Lambda_1^+}{96} & 
\cfrac{-\Theta_1^+}{384} & \cfrac{\Theta_1^-}{384} 
& \cfrac{\Lambda_1^+}{384} & \cfrac{-\Lambda_1^-}{384} \\ \\
0 & 0 & 0 & 0 & 0 & 0 & 0 & 0 & \cfrac{-\Theta_2^+}{640} 
& \cfrac{\Theta_2^-}{640} & \cfrac{\Lambda_2^+}{640} & \cfrac{-\Lambda_2^-}{640} 
\end{array} \right) \,,\]
which (with a simpler notation) corresponds to 
\[A_0^{(3)} \,=\, \left( \begin{array}{ccc}
{\bf R}^{(2)} & {\bf R}^{(4)} & {\bf R}^{(6)} \\ 
{\bf 0} & w_1^{(2)}{\bf P}_1^{(2)} &  w_1^{(3)}{\bf P}_1^{(3)} \\
{\bf 0} & {\bf 0} & w_2^{(3)}{\bf P}_2^{(3)}
\end{array} \right) \,.\]
\[\]

{\bf{}Acknowledgements.}
The second author was partially supported by the Austrian 
Science Fund (FWF) projects W1230, Y-901, and EPSRC grant EP/J018260/1.

\newpage


\end{document}